\documentclass[11pt,a4paper]{amsart}

\usepackage{amsthm}
\usepackage{comment}

\usepackage{lipsum}
\usepackage{amsfonts}
\usepackage{framed}
\usepackage{caption,subcaption,graphicx} % Required for inserting images
\usepackage{epstopdf}
\usepackage{amsmath,amssymb}
\usepackage{hyperref}
\usepackage[noabbrev, capitalise, nameinlink]{cleveref}
\usepackage{multirow}
\usepackage{mathtools}
\usepackage{tikz,pgfplots}
\usepackage{pgfplotstable}
\usepackage[square,numbers]{natbib}
\usepackage{easytable}
\usepackage{tabularray}
\usepackage{dsfont}
\usepackage{algorithm}
\usepackage{algpseudocode}
\usepackage{graphbox}

%%%% plots w pgfplots
\pgfplotsset{
    compat=1.18,
    ErrorPlot/.style={
        p2/.style={semithick, mark options={solid}, mark=Mercedes star, mark size=2},
        p3/.style={semithick, mark options={solid}, mark=oplus, mark size=2},
        p4/.style={semithick, mark options={solid}, mark=square, mark size=2},
        p5/.style={semithick, mark options={solid}, mark=diamond, mark size=2},
        p6/.style={semithick, mark options={solid}, mark=triangle, mark size=2},
        p7/.style={semithick, mark options={solid}, mark=o, mark size=2},
		mstar/.style={semithick, mark options={solid}, mark=10-pointed star, mark size=2},
        fem/.style={semithick, blue},
        order/.style={semithick, gray, dashed}
    },
    width=.4\textwidth,
    legend style={font=\small, at={(0.65, -0.3)},anchor=north west,legend columns=3,draw=darkgray176,},
    log basis x={2},
    log basis y={10},
    tick align=outside,
    tick pos=left,
    x grid style={darkgray176},
    xlabel={H},
    xtick style={color=black},
    y grid style={darkgray176},
    ytick style={color=black},
    minor tick style={draw=none}
}

\definecolor{darkgray176}{RGB}{176,176,176}
\definecolor{gray}{RGB}{128,128,128}

\usepackage{color}
\usepackage{mathtools}
\usepackage[]{todonotes}

\newtheorem{theorem}{Theorem}

\newtheorem{lemma}[theorem]{Lemma}
\theoremstyle{definition}

\theoremstyle{remark}
\newtheorem{remark}[theorem]{Remark}

\Crefname{assumption}{Assumption}{Assumptions}
\numberwithin{theorem}{section}
\numberwithin{equation}{section}
\numberwithin{table}{section}
\numberwithin{figure}{section}

\ifpdf
\DeclareGraphicsExtensions{.eps,.pdf,.png,.jpg}
\else
\DeclareGraphicsExtensions{.eps}
\fi

\newcommand{\ub}{\boldsymbol{u}}
\newcommand{\vb}{\boldsymbol{v}}
\newcommand{\wb}{\boldsymbol{w}}
\newcommand{\fb}{\boldsymbol{f}}
\newcommand{\eb}{\boldsymbol{e}}
\newcommand{\gb}{\boldsymbol{g}}
\newcommand{\Vh}{\mathbf{V}}
\newcommand{\Rd}{\mathbb{R}^d}

\newcommand{\dx}{\,\mathrm{d}x}
% Add a serial/Oxford comma by default.

\textheight=245mm
\textwidth=155mm
\evensidemargin=28.0mm
\oddsidemargin=28.0mm
\topmargin=-10mm
\hoffset=-25.4mm

\makeatletter
\def\author@andify{%
  \nxandlist {\unskip ,\penalty-1 \space\ignorespaces}%
    {\unskip {} \@@and~}%
    {\unskip \penalty-2 \space \@@and~}%
}
\renewcommand{\andify}{%
  \nxandlist{\unskip, }{\unskip{} \@@and~}{\unskip \space \@@and~}}
\makeatother

\title[SLOD for Heterogeneous Linear Elasticity]{Super-Localized Orthogonal Decomposition Method\\for Heterogeneous Linear Elasticity}

\author[C. Belponer]{Camilla Belponer}
\author[J. C. Garay]{Jos\'e C. Garay}
\author[P. Munch]{Peter Munch}
\author[D. Peterseim]{Daniel Peterseim}

\address[C. Belponer, J. C. Garay, D. Peterseim]{Institute of Mathematics, University of Augsburg, Universit\"atsstr.~12a, 86159 Augsburg, Germany}
\email{camilla.belponer@uni-a.de, jose.garay.fernandez@uni-a.de, daniel.peterseim@uni-a.de}

\address[P. Munch]{Institute of Mathematics, Technische Universit\"at Berlin, Strasse des 17. Juni 136, 10623 Berlin Gemany}
\email{muench@math.tu-berlin.de}

\address[D. Peterseim]{Centre for Advanced Analytics and Predictive Sciences (CAAPS), University of Augsburg, Universit\"atsstr.~12a, 86159 Augsburg, Germany}

\thanks{
The research of C. Belponer and D. Peterseim has been funded by
the Deutsche Forschungsgemeinschaft (DFG, German Research Foundation), grant DFG CA 1159/1-4 and PE 2143/1-6 
``Computational multiscale methods for inverse estimation of effective properties of poroelastic tissues''. \\
The work of J.C.~Garay is part of a project that has received funding from the European Research Council (ERC) under the European Union's Horizon 2020 research and innovation programme (Grant agreement No.~865751 -- RandomMultiScales).}

\begin{document}

\begin{abstract}
We present the Super-Localized Orthogonal Decomposition (SLOD) method for the numerical homogenization of linear elasticity problems with multiscale microstructures modeled by a heterogeneous coefficient field without any periodicity or scale separation assumptions. Compared to the established Localized Orthogonal Decomposition (LOD) and its linear localization approach, SLOD achieves significantly improved sparsity properties through a nonlinear superlocalization technique, leading to computationally efficient solutions with significantly less oversampling -- without compromising accuracy. We generalize the method to vector-valued problems and provide a supporting numerical analysis. We also present a scalable implementation of SLOD using the \texttt{deal.II} finite element library, demonstrating its feasibility for high-performance simulations. Numerical experiments illustrate the efficiency and accuracy of SLOD in addressing key computational challenges in multiscale elasticity.
\end{abstract}

\maketitle
{\tiny {\bf Key words:} superlocalization, numerical homogenization, rough coefficients, multiscale method, linear elasticity}

{\tiny {\bf AMS subject classifications.} 
65N12,
65N15, 	
65N30
}

\section{Introduction}
The partial differential equations (PDEs) of linear elasticity are fundamental tools in structural analysis, describing how solid objects deform and experience stress under small loads. In this work, we consider solid materials endowed with highly heterogeneous microstructures encoded in spatially varying PDE coefficients. We specifically avoid assumptions of periodicity or scale separation, which can significantly restrict the range of applicability of classical homogenization theories.

When the microstructure of the material exhibits multiple characteristic length scales, the finite element discretization of the resulting multiscale PDE often requires extremely fine meshes to capture all relevant features. Such meshes lead to prohibitively large systems of equations — especially in elasticity, where the displacement field is vector-valued and the computational burden is further amplified.

Numerical homogenization methods address these challenges by designing \emph{generalized} finite element spaces whose basis functions capture essential microscopic behavior while operating at a coarse, macroscopic scale. A variety of such methods have been proposed, including the Localized Orthogonal Decomposition (LOD) method~\cite{MaP14} (see also~\cite{KPY18, PeS16, HeP13, BrennerLOD}), the Multiscale Finite Element Method (MsFEM)~\cite{HoW97,EH09}, the Generalized Multiscale Finite Element Method (GMsFEM)~\cite{EFENDIEV2013116}, the multiscale Generalized Finite Element Method (MS-GFEM)~\cite{BaL11, BLS20, Ma21, ma2024unifiedframeworkmultiscalespectral}, Bayesian approaches~\cite{Owh15}, rough polyharmonic splines~\cite{OZB14}, Gamblets~\cite{Owh17}, and multiscale methods inspired by FETI-DP and BDDC frameworks~\cite{MaSa21, KKR18}. General overviews can be found in~\cite{OwhS19, MalP20, CEH23, blanc2023homogenization} and in the review~\cite{AHP21}.

Among these techniques, the LOD approach has proven successful both theoretically and empirically for a range of PDEs beyond the prototypical Poisson problem, including heterogeneous linear elasticity~\cite{HeP16} and more general multiphysics problems such as thermoelasticity~\cite{MaPersson17} and poroelasticity~\cite{FuACMPP19,AltCMPP20}. In LOD, the degrees of freedom are associated with the elements (or other suitable entities such as vertices or edges) of a coarse mesh, chosen independently of the underlying fine-scale heterogeneities. Microscopic details are then incorporated through local patch problems---often called \emph{cell problems}---in slightly enlarged regions around these coarse elements (oversampling patches). The optimal convergence rates of the error with respect to the coarse mesh size $H$ typically require an oversampling region of diameter $\mathcal{O}(H \,\lvert \log H\rvert)$. Although the appearance of a logarithmic factor is optimal in existing constructions and common among numerical homogenization methods, the associated moderately increased communication and decreased sparsity still pose a bottleneck in large-scale applications, where the vector-valued nature of the problem further amplifies the computational cost.

An improved localization strategy, named \emph{Super-Localized Orthogonal Decomposition}(SLOD), was introduced in~\cite{HaPe21b} for a prototype scalar diffusion problem to address precisely this issue. By relaxing the linear structure of the classical LOD construction, SLOD achieves (in practice) \emph{superexponential} decay of the localization error instead of merely exponential decay. Although superexponential convergence hinges on a conjecture of spectral geometry~\cite{HaPe21b}, it is provably at least as effective as LOD. More importantly, its sharper localization yields significant computational gains, reducing the diameter of the local patch problems from $\mathcal{O}(H \,\lvert \log H\rvert)$ down to $\mathcal{O}(H \sqrt{\lvert \log H\rvert})$. In turn, this improvement enhances the sparsity of the resulting system by a factor of $(|\log H|)^{d/2}$ (where $d$ is the dimension of the spatial domain) and accelerates both the offline assembly and the online solution phases.

So far, SLOD has been explored for linear and nonlinear scalar PDEs, such as the nonlinear Schr\"odinger equation~\cite{PeWZi24}, parametric problems~\cite{BHP22}, stochastic homogenization~\cite{HaMoPe24}, Helmholtz equations~\cite{FrHaPe21}, convection-dominated diffusion~\cite{BoFrPe24}, and even spatial network models~\cite{graphSLOD}. In this paper, we \emph{extend} the superlocalization paradigm to \emph{vector-valued} PDEs, focusing on heterogeneous linear elasticity. We show that the method retains its superior localization properties in the elasticity context, providing substantial enhancements in practical performance compared to standard LOD~\cite{HeP16}. 

We also present a basic numerical analysis that demonstrates that SLOD is never worse than LOD in theory, while offering clear advantages in practice. Furthermore, we develop a new implementation of SLOD within the open-source \texttt{deal.II} finite element library. Our experience shows that integrating multiscale methods into modern high-performance finite element codes is feasible and beneficial for addressing the large-scale, heterogeneous elasticity problems commonly encountered as a building block for advanced models in engineering and the sciences.

The remainder of this paper is organized as follows. In Section~\ref{s:model} we introduce the linear elasticity equations and discuss their solution challenges in the presence of multiscale microstructures. Section~\ref{s:ideal} describes the construction of an \emph{ideal} numerical homogenization framework, serving as a guiding principle for subsequent sections. In Section~\ref{s:slod}, we detail the construction of superlocalized functions employed in the definition of the SLOD method for vector-valued problems. Section~\ref{s:anal} introduces the practical SLOD method and provides a stability and convergence analysis of it, including rigorous error estimates and condition number bounds. Finally, in Section~\ref{s:numexp}, we outline the implementation details in \texttt{deal.II} and present numerical experiments that confirm the efficiency and accuracy of SLOD on benchmark problems of heterogeneous linear elasticity.

\section{Mathematical model problem}\label{s:model}
In this section, we present the linear elasticity equations and describe the challenges that arise when solving these equations in the presence of multiscale microstructural features. Such features often necessitate exceptionally fine meshes in standard finite element schemes, motivating the development of numerical homogenization methods. An important aspect of these methods is the \emph{support locality} of the basis functions defining the approximate solution spaces, which helps to capture fine-scale effects at coarse scales without excessive computational overhead. Accordingly, we will later examine rapidly decaying basis functions, laying the foundation for the construction of multiscale approximation spaces with localized basis functions.
\subsection{Formulation of the Problem}
Let $\Omega \subset \mathbb{R}^d$ ($d=2$ or $3$) be a polytopal domain that represents the geometry of an elastic body. The linear elasticity problem with homogeneous Dirichlet boundary conditions seeks the displacement field $\mathbf{\ub}:\Omega \rightarrow \mathbb{R}^d$ and the (second-order) stress tensor $\sigma(\ub(\cdot)): \Omega \rightarrow \mathbb{R}^{d \times d}$ such that
\begin{equation}\label{Model}
\begin{array}{rcll}
- \nabla \cdot \sigma(\ub) &=& \boldsymbol{f} & \text{in }\Omega, \\
\sigma(\ub)&=& \mathbf{A}:\varepsilon(\mathbf{\ub}) & \text{in }\Omega, \\
\mathbf{\ub}&=&\mathbf{0} & \text{on }\partial \Omega, \\
\end{array}
\end{equation}
where $\varepsilon(\ub(\cdot)): \Omega \rightarrow \mathbb{R}^{d\times d}$ with
\begin{equation}
\varepsilon(\ub):=\frac{1}{2}(\nabla \ub + \nabla \ub^T)
\end{equation}
is the (second-order) linearized strain tensor accounting for small deformations of the elastic material, $\mathbf{A}: \Omega \rightarrow \mathbb{R}^{d \times d \times d \times d}$ is the (fourth-order) elastic tensor establishing the linear relationship between the strain and stress tensors and describing the microstructural properties of the material, and $\boldsymbol{f}:\Omega \rightarrow \mathbb{R}^d$ represents the body force per unit volume acting on the object. Here $(\nabla \vb)_{ij}= \frac{\partial\vb_i}{\partial x_j}$.

The double-dot product ``$:$'' between two tensors contracts over the last two indices of the first tensor and the first two indices of the second tensor. Consequently, $\sigma_{ij}(\ub)= \sum_{k,l=1}^d\mathbf{A}_{ijkl}\varepsilon_{kl}(\ub)$. For any two second-order-tensor-valued functions $\xi$,$\zeta : \Omega \rightarrow \mathbb{R}^{d\times d}$, we define
\begin{equation*}
(\xi,\zeta)_{L^2(\Omega)}:=\int_{\Omega} \xi:\zeta \dx = \int_{\Omega} \sum_{i,j=1}^d\xi_{ij}\zeta_{ij} \dx.
\end{equation*}
Similarly, for any two vector-valued functions $\wb,\vb:\Omega \rightarrow \mathbb{R}^{d}$, we set
\begin{equation*}
(\wb,\vb)_{L^2(\Omega)}:=\int_{\Omega} \wb \cdot \vb \dx.
\end{equation*}
The $L^2$-norm of a vector- or tensor-valued function is given by $$\|\cdot\|_{L^2(\Omega)}:=\sqrt{(\cdot,\cdot)_{L^2(\Omega)}}.$$

With $\mathbf{V} := H^1_0(\Omega;\mathbb{R}^d)$ used as both the trial and test spaces, and assuming that $\mathbf{A}$ is symmetric in the sense that
\[
  \mathbf{A}_{ijkl} = \mathbf{A}_{jikl} = \mathbf{A}_{ijlk} = \mathbf{A}_{klij}
  \quad
  \text{a.e.\ in } \Omega,
\]
the weak formulation of \eqref{Model} reads: find $\ub \in \mathbf{V}$ such that
\begin{equation}\label{weak_form}
  a(\ub,\vb) = (\boldsymbol{f},\,\vb)_{L^2(\Omega)}
  \quad\text{for all } \vb \in \mathbf{V},
\end{equation}
where
\begin{equation*}
  a(\ub,\vb)
  =
  \bigl(\mathbf{A} : \varepsilon(\ub),\varepsilon(\vb)\bigr)_{L^2(\Omega)}.
\end{equation*}
Throughout this paper, we assume that $\boldsymbol{f}\in L^2(\Omega;\mathbb{R}^d)$ and 
\(\mathbf{A}\in L^{\infty}(\Omega;\mathbb{R}^{d\times d\times d\times d})\) satisfy the 
uniform ellipticity and boundedness condition: there exist constants $\alpha, \beta > 0$ such that
\begin{equation}\label{A-alpha-beta-bounds}
  2\,\alpha \,\xi : \xi 
  \le
  (\mathbf{A} : \xi) : \xi
  \le
  \beta \,\xi : \xi
\end{equation}
for all symmetric tensors $\xi\in \mathbb{S}$, almost everywhere in $\Omega$.
Here, $\mathbb{S}$ denotes the set of all symmetric $d\times d$ tensors.

Since Dirichlet boundary conditions are prescribed on the entire boundary of $\Omega$, it follows from Korn's inequality~\cite{HeP16} that
\begin{equation}\label{Gradient-Strain-Dirichlet}
  \|\nabla \vb\|_{L^2(\Omega)}
  \le
  \sqrt{2}\,\|\varepsilon(\vb)\|_{L^2(\Omega)}
  \quad
  \text{for all } \vb \in \mathbf{V}.
\end{equation}
Then, combining \eqref{A-alpha-beta-bounds} and \eqref{Gradient-Strain-Dirichlet}, and noting 
that $\|\varepsilon(\vb)\|_{L^2(\Omega)} \le \|\nabla \vb\|_{L^2(\Omega)}$, we arrive at
\begin{equation}\label{Equivalence-Energy-H1semi}
  \alpha \,\|\nabla \vb \|^2_{L^2(\Omega)}
  \le
  a(\vb,\vb)
  \le
  \beta \,\|\nabla \vb\|^2_{L^2(\Omega)}
  \quad
  \text{for all } \vb \in \mathbf{V}.
\end{equation}

\begin{remark}
  In the case of an isotropic material, the elasticity tensor 
  $\mathbf{A}$ takes the form
  \[
    \mathbf{A}_{ijkl}
    =
    \mu\,(\delta_{ik}\,\delta_{jl} +\delta_{il}\,\delta_{jk})
    +
    \lambda\,\delta_{ij}\,\delta_{kl},
  \]
  where $\delta_{ij}$ denotes the Kronecker delta, and $\mu,\lambda$ are the Lam\'e parameters. 
  The corresponding stress tensor is given by
  \[
    \sigma (\ub)
    =
    2\,\mu\,\varepsilon(\ub)
    +
    \lambda\,(\nabla\cdot \ub)\,\mathbf{I},
  \]
  with $\mathbf{I}\in \mathbb{R}^{d\times d}$ the second order identity tensor.
\end{remark}

\begin{remark}
Using an approach similar to that of \cite{Brenner2022} it can be shown that we can consider $\alpha=1$ without loss of generality. 
\end{remark}

\subsection{Standard finite element discretization}
Let $\mathcal{T}_h$ be a Cartesian mesh of $\Omega$ with mesh size $h$, and let $V_h$ be the $\mathcal{Q}_1$ finite element space associated with $\mathcal{T}_h$ \cite{BrS08}. Furthermore, define $\mathbf{V}_h:=(V_h)^d$. Then the standard finite element approximation of the solution $\ub\in \mathbf{V}$ of \eqref{weak_form} is given by $\ub_h\in \mathbf{V}_h$ such that
\begin{equation}\label{Galerkin-approx}
a(\ub_h,\vb) = (\boldsymbol{f},\vb)_{L^2(\Omega)}\quad \text{for all }\vb \in \mathbf{V}_h.
\end{equation}

Under suitable regularity assumptions on $\ub$, we have the classic error estimate
\begin{equation}\label{H1-H2-estimate}
\|\ub-\ub_h\|_{H^1(\Omega)}\leq C_{\mathbf{A}}h\|\ub \|_{H^2(\Omega)},
\end{equation}
where $C_{\mathbf{A}}$ is a constant depending on the size of the coefficients in $\mathbf{A}$ 
(see \cite[Theorem~3.1]{HeP16}). Moreover, the term $\|\ub\|_{H^2(\Omega)}$ can and usually will grow with the oscillations of $\mathbf{A}$. 
In fact, under standard elliptic regularity arguments, one can show 
$$\|\ub\|_{H^2(\Omega)}\leq C(\ub,\Omega) \|\mathbf{A}\|_{W^{1,\infty}}.$$
The inequality is sharp, so if $\mathbf{A}$ varies rapidly, then $\|\ub\|_{H^2(\Omega)}$ can become arbitrarily large.

\section{Ideal numerical homogenization}\label{s:ideal}
For materials with strongly heterogeneous (multiscale) microstructures, the norm $\|\mathbf{A}\|_{W^{1,\infty}}$ can be very large. As indicated by the estimate 
\eqref{H1-H2-estimate}, a standard finite element discretization must then employ 
extremely fine meshes to resolve the fine-scale features and mitigate the impact of 
$\|\mathbf{A}\|_{W^{1,\infty}}$. This makes standard FEM prohibitively expensive in practice.

In the following, we address this challenge by developing \emph{numerical homogenization} 
methods. These methods aim to produce approximations with optimal decay rates for the 
discretization error, \emph{without} the pre-asymptotic effects inherent in standard FEM. 
Moreover, they require no regularity assumptions on the solution beyond $H^1$-regularity.

\subsection{Approximation space}
Let the \emph{solution operator} for problem \eqref{weak_form} be given by $$\mathcal{A}^{-1}: L^2(\Omega;\mathbb{R}^d)\rightarrow H^1_0(\Omega;\mathbb{R}^d),$$
so that $\ub = \mathcal{A}^{-1}\fb$ is the exact solution. 
Similarly, define the \emph{standard finite element solution operator} 
$\mathcal{A}^{-1}_h: L^2(\Omega;\mathbb{R}^d)\rightarrow \mathbf{V}_h$, 
such that $\ub_h = \mathcal{A}^{-1}_h\fb$ is the standard FEM approximation. 
Throughout the paper, we assume that the fine mesh $\mathcal{T}_h$ is sufficiently 
refined to resolve the fine-scale variations of $\mathbf{A}$, making 
$\|\mathcal{A}^{-1}\fb - \mathcal{A}^{-1}_h\fb\|_a$ small. Here, 
$\|\cdot\|_a := \sqrt{a(\cdot,\cdot)}$ is the \emph{energy norm}.

Now let $\mathcal{T}_H$ be a Cartesian mesh of $\Omega$ with mesh size $H > h$, 
obtained by coarsening $\mathcal{T}_h$. We define
$$\mathcal{V}_H:=\mathrm{span}\{\mathcal{A}^{-1}_h (\eb_k \chi_T ) : T \in \mathcal{T}_H,\; k\in\{1,\ldots,d\}\},$$ where $\eb_k \in \mathbb{R}^d$ is the canonical vector whose $k$-th entry equals $1$ and the remaining equal zero, and $\chi_T$ is the characteristic function of $T$. Let $$\mathcal{A}^{-1}_H: L^2(\Omega;\mathbb{R}^d)\rightarrow \mathcal{V}_H$$ be the operator defining the approximate solution of \eqref{weak_form} in $\mathcal{V}_H$ such that
\begin{equation}\label{Ideal-coarse-solution-weak-form}
a(\mathcal{A}^{-1}_H\fb,\vb)=(\fb,\vb)_{L^2(\Omega)}\quad \text{for all } \vb \in \mathcal{V}_H,\; \fb \in L^2(\Omega;\mathbb{R}^d).
\end{equation}
By construction, if $\fb$ lies in $(\mathbb{Q}^0(\mathcal{T}_H))^d$ (i.e., its components are piecewise-constant functions with respect to the coarse mesh), then
$$\mathcal{A}^{-1}_h \fb = \mathcal{A}^{-1}_H \fb.$$ In other words, letting $$\Pi_H: L^2(\Omega;\mathbb{R}^d)\rightarrow (\mathbb{Q}^0(\mathcal{T}_H))^d$$ 
be the $L^2$-projection onto piecewise constants, we have $$\mathcal{A}^{-1}_h \circ \Pi_H = \mathcal{A}^{-1}_H \circ \Pi_H.$$ Hence, if $\|\fb - \Pi_H\fb\|_{L^2(\Omega)}$ is small, then 
$\mathcal{A}^{-1}_H\fb$ is a good approximation of $\mathcal{A}^{-1}_h\fb$ 
and, by extension, of $\mathcal{A}^{-1}\fb$ \cite{AHP21,MalP20}.

Observe that the stiffness matrix associated with \eqref{Ideal-coarse-solution-weak-form} 
is much smaller than the one in \eqref{Galerkin-approx}. In principle, this suggests 
solving \eqref{Ideal-coarse-solution-weak-form} rather than \eqref{Galerkin-approx}, 
since the linear system is substantially reduced while maintaining comparable accuracy. 
However, the functions $\mathcal{A}^{-1}_h(\eb_k \,\chi_T)$ generally have 
\emph{global support} and \emph{slow decay} (they are not negligible outside a small 
patch of $\Omega$). This makes their computation expensive and, in turn, 
renders the direct use of \eqref{Ideal-coarse-solution-weak-form} impractical.

Fortunately, one can construct an alternative basis for $\mathcal{V}_H$ whose 
functions \emph{decay rapidly} and can be approximated by \emph{locally supported} 
basis functions. Such \emph{localized} basis functions span an approximate solution 
space with nearly the same approximation properties as $\mathcal{V}_H$. 
We next focus on deriving these rapidly decaying basis functions and their 
efficient localized representations.
\subsection{Rapidly decaying basis functions}
We aim to construct basis functions of $\mathcal{V}_H$ in the form
\begin{equation}\label{varphi-levl2}
\varphi_{\tau,s}=\mathcal{A}^{-1}_{h}\gb_{\tau,s} \quad \mathrm{with}\quad \gb_{\tau,s} \coloneqq \sum_{k=1}^d\sum_{T\in \mathcal{T}_{H}}\eb_k c^{(\tau,s)}_{T,k}\chi_{T}.
\end{equation}
Here, $\tau \in \mathcal{T}_H$ and $s \in \{1,\ldots,d\}$. 
The vector of coefficients 
$\bigl(c_{T,k}^{(\tau,s)}\bigr)_{(T,k)\in \mathcal{T}_H \times \{1,\ldots,d\}}$ 
is chosen so that $\varphi_{\tau,s}$ decays rapidly away from the coarse element $\tau$. 
We refer to $\gb_{\tau,s}$ as the \emph{$\mathbb{Q}^0$-companion} of $\varphi_{\tau,s}$, 
while $\varphi_{\tau,s}$ is called the \emph{regularized companion} of $\gb_{\tau,s}$.

Before deriving these rapidly decaying basis functions, we introduce several definitions
adapted from \cite{BHP22,GarayMohrPeterseimetal}, which will be used in their construction. 
These definitions generalize the notion of local patches and support localization in the setting of vector-valued elasticity problems with heterogeneous coefficients.
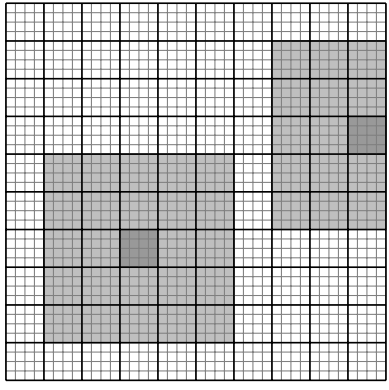
\begin{figure}[!ht]
\centering
  \begin{tikzpicture}
    \draw[fill=gray!50]  (0.5,0.5) -- (0.5,3) -- (3,3) -- (3,0.5) -- cycle;
    \draw[fill=gray!50]  (3.5,2) -- (3.5,4.5) -- (5,4.5) -- (5,2) -- cycle;
  \draw[fill=gray!80]  (1.5,1.5) -- (2,1.5) -- (2,2) -- (1.5,2) -- cycle;
  \draw[fill=gray!80]  (4.5,3) -- (4.5,3.5) -- (5,3.5) -- (5,3) -- cycle;
  \draw[gray] (0, 0) grid[step=0.125cm] (5, 5);
  \draw [black, semithick](0, 0) grid[step=0.5cm] (5, 5);
  \end{tikzpicture}
  \caption{Illustration of two second-order patches $\omega^{(2)}_T$, one built around an element far-enough from the boundary and another around an element touching the boundary. The fine mesh is presented in gray.}
\end{figure}
Define the $m$-th order patch $\omega_T^{(m)}\subset\Omega$  of an element $T\in \mathcal{T}_{H}$ recursively as
\begin{equation*}
\omega_{T}^{(m)}=\bigcup \{K\in \mathcal{T}_{H}: K\cap \omega^{(m-1)}_{T}\neq \emptyset\} \quad \mathrm{with}\quad \omega^{(0)}_{T}=T\in \mathcal{T}_{H}.
\end{equation*}
In other words, \(\omega_T^{(m)}\) is the union of all coarse elements 
whose intersection with \(\omega_T^{(m-1)}\) is nonempty, starting from 
the initial patch \(\omega_T^{(0)} = T\). 

For the remainder of this subsection, we fix \(m > 1\) and \(\tau \in \mathcal{T}_H\), 
and let \(\omega := \omega_{\tau}^{(m)}\). 
This choice of \(\omega\) will serve as the region in which subsequent constructions (e.g., local correctors, localized basis functions) are supported or computed.

Denote the restriction of $\mathcal{T}_{H}$ to the patch $\omega$ by $\mathcal{T}_{H,\omega}:=\{T\in\mathcal{T}_{H}:T\subset \omega\}$. Similarly, the restriction of $\mathbf{V}_h$ to $\omega$ is given by 
$$\mathbf{V}_{h,\omega}:=\{\vb\in H^1_0(\omega;\mathbb{R}^d):\vb_{|_{T}}\in (\mathbb{Q}^{1}(T))^d\; \text{for all } T\in \mathcal{T}_{h,\omega}\}.$$ Let $$\widetilde{\mathbf{V}}_{h,\omega}:=\{\vb\in H^1(\omega;\mathbb{R}^d):\vb_{|_{T}}\in (\mathbb{Q}^{1}(T))^d\; \text{for all } T\in \mathcal{T}_{h,\omega}\}.$$ Thus, the trace of a function in $\widetilde{\mathbf{V}}_{h,\omega}$ may be non-zero. With $\Sigma_{\omega}=\partial \omega \setminus \partial \Omega$, define the trace operator
\begin{equation*}
\text{tr}_{\Sigma_{\omega}}: \widetilde{\mathbf{V}}_{h,\omega}\rightarrow X_{\omega}:=\text{image tr}_{\Sigma_{\omega}}\subset H^{1/2}(\Sigma_{\omega};\mathbb{R}^d).
\end{equation*}
Note that the space $X_{\omega}$ can be equipped with the norm
\begin{equation*}
\|\wb\|_{X_{\omega}}:=\text{inf}\left\{ \|\vb\|_{H^{1}(\omega)}: \vb\in \widetilde{\mathbf{V}}_{h,\omega}, \; \text{tr}_{\Sigma_{\omega}}\vb=\wb \right\},
\end{equation*}
where $\|\vb\|_{H^1(\omega)}^2=\|\vb\|_{L_2(\omega)}^2+\|\nabla \vb\|_{L_2(\omega)}^2$.

By definition of the $\|\cdot\|_{X_{\omega}}$ norm, the continuity of the trace operator holds regardless of the patch geometry, i.e., 
\begin{equation}\label{trace-norm-continuity}
\|\text{tr}_{\Sigma_{\omega}}\vb\|_{X_{\omega}}\leq\|\vb\|_{H^{1}(\omega)} \;\;\;\;\;\;\text{for all } \vb\in \widetilde{\mathbf{V}}_{h,\omega}\subset H^1(\omega;\mathbb{R}^d).
\end{equation}

Let $a_{\omega}(\cdot,\cdot):H^{1}(\omega;\mathbb{R}^d) \times H^{1}(\omega;\mathbb{R}^d) \rightarrow \mathbb{R}$ such that $a_\omega(\ub,\vb)\coloneqq (\mathbf{A}:\varepsilon(\ub),\varepsilon(\vb))_{L^2(\omega)}$,
and let the linear operator $\mathcal{A}^{-1}_{h,\omega}:L^{2}(\omega;\mathbb{R}^d)\rightarrow \mathbf{V}_{h,\omega}$ be given by
\begin{equation*}
a_\omega(\mathcal{A}^{-1}_{h,\omega} \gb,\vb )=(\gb,\vb)_{L^2(\omega)},\;\;\; \text{for all } \vb\in \mathbf{V}_{h,\omega}, \text{ and } \gb\in L^{2}(\omega;\Rd).
\end{equation*}
Denote by $\mathcal{E}_\omega: L^2(\omega;\Rd) \rightarrow L^2(\Omega;\Rd)$ 
an extension by zero operator. With 
$$\gb=\sum_{k=1}^d\sum_{T\in \mathcal{T}_{H,\omega}}\eb_k c_{T}\chi_{T}\in (\mathbb{Q}^{0}(\mathcal{T}_{H,\omega}))^d,$$
consider 
\begin{equation*}
 \bar{\psi}_{\gb}:=\mathcal{A}^{-1}_{h,\omega}\gb \in \mathcal{V}_{H,\omega}\quad \mathrm{and}\quad \psi_{\gb}:=\mathcal{A}^{-1}_{h}\mathcal{E}_\omega(\gb) \in \mathcal{V}_{H},
\end{equation*}
where $\mathcal{V}_{H,\omega}=\{\mathcal{A}^{-1}_{h,\omega}\eb_k \chi_T : T\in \mathcal{T}_{H,\omega},\;k\in\{1,\ldots,d\}\}$.
We conclude the set of definitions by defining the {\it traction} conormal derivative of $\bar{\psi}_{\gb}$ as the functional $\gamma_{\bar{\psi}_{\gb}}:X_\omega \rightarrow \mathbb{R}$ such that
\begin{equation}\label{Def-Conormal-Der}
\langle \gamma_{\bar{\psi}_g},\text{tr}_{\Sigma_{\omega}}(\vb) \rangle=a_{\omega}(\bar{\psi}_{\gb},\vb)-(\gb,\vb)_{L^2(\omega)}\;\;\;\;\text{for all } \vb\in H^{1}(\omega;\Rd).
\end{equation}
From the above definition, we have $$\langle \gamma_{\bar{\psi}_g},\text{tr}_{\Sigma_{\omega}}(\vb) \rangle = (\sigma(\bar{\psi}_g)\nu,\text{tr}_{\Sigma_{\omega}}(\vb))_{L_2(\Sigma_{\omega})},$$ where $\nu$ is the outer normal unit vector on $\Sigma_{\omega}$.

Note that $\mathcal{E}_{\omega}(\bar{\psi}_{\gb})$ does not necessarily belong to $\mathcal{V}_{H}$. However, if $\gb$ is such that $\mathcal{E}_{\omega}(\bar{\psi}_{\gb})=\psi_{\gb}$, then $\psi_{\gb}$ would be a locally supported function of $\mathcal{V}_{H}$ whose $\mathbb{Q}^{0}$-companion $\mathcal{E}_{\omega}(g)$ is also locally supported. Furthermore, if such equality holds with a small $\omega$ (i.e., $m$ is small) we would say that $\psi_{\gb}$ is rapidly decaying. 
  
In the following lemma, we determine a bound for the energy norm of the {\it localization error} $\mathcal{E}_{\omega}(\bar{\psi}_{\gb})-\psi_{\gb}$, which depends on the $X_{\omega}'$-norm of $\gamma_{\bar{\psi}_{\gb}}$ given by
\begin{equation}\label{ConormalDer-Norm-definition}
\|\gamma_{\bar{\psi}_{\gb}}\|_{X_{\omega}'}=\sup_{\wb\in X_\omega\setminus \{0\}}\frac{\langle \gamma_{\bar{\psi}_{\gb}},\wb \rangle}{\|\wb\|_{X_\omega}},
\end{equation}
providing a way to measure the dependence of the error on the coefficients defining $\gb$. The proof is identical to the scalar PDE case given in \cite{GarayMohrPeterseimetal}.
\begin{lemma}\label{Lemma-Localization-Error-bound-conormalDer}
Let $\bar{\psi}_{g}$, $\psi_g$, and $\gamma_{\bar{\psi}_{g}}$ be defined as above. Then, the energy norm of the localization error has the bound
\begin{equation}\label{Loc-Error-bound-conormalDer}
\|\mathcal{E}_\omega(\bar{\psi}_{\gb})-\psi_{\gb}\|_{a}\leq \frac{1}{\sqrt{\alpha}}\biggr(1+\frac{\mathrm{diam}(\Omega)}{\pi}\biggl)\|\gamma_{\bar{\psi}_{\gb}}\|_{X_{\omega}'},
\end{equation}
where $\mathrm{diam}(\Omega)$ denotes the diameter of $\Omega$ and the constant $\alpha$ is given in \eqref{A-alpha-beta-bounds}.
\end{lemma}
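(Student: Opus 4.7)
The plan is to derive a Galerkin-type identity for the localization error $e := \mathcal{E}_\omega(\bar{\psi}_\gb) - \psi_\gb$, express its energy norm through the conormal derivative, and then convert the resulting $H^1$-norm bound into an energy-norm bound via Poincar\'e's inequality and the coercivity estimate \eqref{Equivalence-Energy-H1semi}.

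First, I would observe that $\bar{\psi}_\gb \in \mathbf{V}_{h,\omega} \subset H^1_0(\omega;\mathbb{R}^d)$, so its extension by zero $\mathcal{E}_\omega(\bar{\psi}_\gb)$ belongs to $\mathbf{V}_h$, and therefore $e \in \mathbf{V}_h$. Next, for any test function $\vb \in \mathbf{V}_h$ I would compute $a(e,\vb)$. The term $a(\psi_\gb, \vb)$ equals $(\mathcal{E}_\omega \gb, \vb)_{L^2(\Omega)} = (\gb, \vb|_\omega)_{L^2(\omega)}$ by definition of $\psi_\gb$ and since $\mathcal{E}_\omega\gb$ vanishes outside $\omega$, while the extension-by-zero term $a(\mathcal{E}_\omega \bar{\psi}_\gb, \vb)$ collapses to $a_\omega(\bar{\psi}_\gb, \vb|_\omega)$. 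Applying the definition \eqref{Def-Conormal-Der} of the traction conormal derivative then yields the key identity
\begin{equation*}
  a(e,\vb) = \langle \gamma_{\bar{\psi}_\gb}, \mathrm{tr}_{\Sigma_\omega}(\vb|_\omega) \rangle \quad \text{for all } \vb \in \mathbf{V}_h.
\end{equation*}

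Testing with $\vb = e$, applying the definition \eqref{ConormalDer-Norm-definition} of $\|\cdot\|_{X_\omega'}$, and invoking the trace continuity \eqref{trace-norm-continuity} gives
\begin{equation*}
  \|e\|_a^2 \le \|\gamma_{\bar{\psi}_\gb}\|_{X_\omega'} \, \|e\|_{H^1(\omega)} \le \|\gamma_{\bar{\psi}_\gb}\|_{X_\omega'} \, \|e\|_{H^1(\Omega)}.
\end{equation*}
To close the estimate I would pass from the full $H^1$-norm on $\Omega$ to the energy norm. Since $e \in H^1_0(\Omega;\mathbb{R}^d)$, the Poincar\'e inequality supplies $\|e\|_{L^2(\Omega)} \le \tfrac{\mathrm{diam}(\Omega)}{\pi}\|\nabla e\|_{L^2(\Omega)}$, whence $\|e\|_{H^1(\Omega)}^2 \le \bigl(1 + (\mathrm{diam}(\Omega)/\pi)^2\bigr)\|\nabla e\|_{L^2(\Omega)}^2$. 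Using the elementary inequality $\sqrt{1+x^2} \le 1 + x$ for $x \ge 0$ produces the cleaner bound $\|e\|_{H^1(\Omega)} \le \bigl(1 + \mathrm{diam}(\Omega)/\pi\bigr)\|\nabla e\|_{L^2(\Omega)}$. The coercivity estimate \eqref{Equivalence-Energy-H1semi} finally yields $\|\nabla e\|_{L^2(\Omega)} \le \|e\|_a/\sqrt{\alpha}$, so that combining all inequalities and dividing through by $\|e\|_a$ gives the claimed bound.

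No step poses a serious obstacle: the algebra is driven by the definition \eqref{Def-Conormal-Der}, which was tailored precisely to capture the interior residual. The one mildly delicate point is the shape of the prefactor $1 + \mathrm{diam}(\Omega)/\pi$ rather than $\sqrt{1 + (\mathrm{diam}(\Omega)/\pi)^2}$, which is recovered only through the elementary bound $\sqrt{1+x^2}\le 1+x$; keeping the constant in exactly the form stated in the scalar analogue of \cite{GarayMohrPeterseimetal} is what makes this a straightforward adaptation to the vector-valued elasticity setting.
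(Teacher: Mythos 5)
Your proposal is correct and follows essentially the same route as the paper: the Galerkin identity $a(e,\vb)=\langle\gamma_{\bar{\psi}_{\gb}},\mathrm{tr}_{\Sigma_\omega}(\vb|_\omega)\rangle$, the dual-norm and trace-continuity bounds, and the Poincar\'e--Friedrichs plus coercivity step to convert $\|\cdot\|_{H^1}$ into $\tfrac{1}{\sqrt{\alpha}}(1+\mathrm{diam}(\Omega)/\pi)\|\cdot\|_a$. The only (harmless) cosmetic difference is that you apply Poincar\'e on all of $\Omega$ after enlarging $\|e\|_{H^1(\omega)}$ to $\|e\|_{H^1(\Omega)}$, whereas the paper works on $\omega$ and then uses $\mathrm{diam}(\omega)\le\mathrm{diam}(\Omega)$ and $\|\vb|_\omega\|_{a_\omega}\le\|\vb\|_a$; both yield the same constant.
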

\begin{proof}
We have $\text{for all } \vb \in H^{1}(\Omega;\Rd)$ that
\begin{eqnarray}\label{Loc-Error-bound-conormalDer-prelim-result}
\nonumber a(\mathcal{E}_{\omega}(\bar{\psi}_{\gb})-\psi_{\gb},\vb)&=&a_{\omega}(\bar{\psi}_{\gb},\vb_{|_{\omega}})-(\gb,\vb_{|_{\omega}})_{L^2(\omega)} \\ \nonumber
&=&\langle \gamma_{\bar{\psi}_{\gb}},\text{tr}_{\Sigma_{\omega}}(\vb_{|_{\omega}}) \rangle \\ \nonumber
&\leq & \|\gamma_{\bar{\psi}_{\gb}}\|_{X_{\omega}'} \|\text{tr}_{\Sigma_{\omega}}(\vb_{|_{\omega}})\|_{X_{\omega}} \\ \nonumber
&\leq &\|\gamma_{\bar{\psi}_{\gb}}\|_{X_{\omega}'}\|\vb_{|_{\omega}}\|_{H^{1}(\omega)}\\ 
&\leq & \frac{1+\frac{\mathrm{diam}(\omega)}{\pi}}{\sqrt{\alpha}}\|\gamma_{\bar{\psi}_{\gb}}\|_{X_{\omega}'} \|\vb_{|_{\omega}}\|_{a_{\omega}},
\end{eqnarray}
where the first inequality comes from the definition of the $X'_{\omega}$-norm, the second inequality from (\eqref{trace-norm-continuity}), and the last inequality is obtained using the Poincar\'e-Friedrichs inequality and the energy norm bound given in \eqref{Equivalence-Energy-H1semi}. 

Then, with $\vb=\mathcal{E}_{\omega}(\bar{\psi}_{\gb})-\psi_{\gb}$ in \eqref{Loc-Error-bound-conormalDer-prelim-result}, and since $\|\vb_{|_{\omega}}\|_{a_{\omega}}\leq \|\vb\|_{a}$ and $\mathrm{diam}(\omega)\leq \mathrm{diam}(\Omega)$, the inequality \eqref{Loc-Error-bound-conormalDer} is obtained.
\end{proof}

It follows from Lemma~\ref{Lemma-Localization-Error-bound-conormalDer} that, if there exists a non-trivial $\gb\in (\mathbb{Q}^0(\mathcal{T}_{H,\omega}))^d$ such that $\|\gamma_{\bar{\psi}_{\gb}}\|_{X_{\omega}'}$ equals $0$, then there exists a locally supported function of $\mathcal{V}_{H}$ with locally supported $\mathbb{Q}^0$-companion $\mathcal{E}_\omega(\gb)$.

In general, it is not always possible to construct locally supported functions whose traction 
conormal derivatives (in the dual norm) vanish completely. Nevertheless, one \emph{can} obtain 
locally supported functions for which these quantities are merely \emph{small}. Numerical 
experiments reveal that certain basis functions with small traction conormal derivatives 
may lead to ill-conditioning, since they can lack sufficient linear independence. 
Hence, our objective is to identify localized functions that simultaneously feature small localization 
error \emph{and} maintain basis stability.

To achieve a stable basis, we seek locally supported functions 
\(\varphi_{T,k} \in H^1_0(\Omega;\Rd)\) that satisfy the 
\emph{basis stability condition}
\begin{equation}\label{basis-stability-condition}
\Bigg\|\frac{\Pi_{{H}}\varphi_{T,k}}{z_{T,k}}-\eb_{k}\chi_{T}\Bigg\|_{L^{\infty}(\Omega)}\leq\delta_s,
\end{equation} 
where
\begin{equation*}
z_{T,k}=\frac{\big(\Pi_{{H}}\varphi_{T,k},\eb_{k}\chi_{T}\big)_{L^2(\Omega)}}{\left(\eb_{k}\chi_{T},\eb_{k}\chi_{T}\right)_{L^2(\Omega)}},
  \quad
  T \in \mathcal{T}_H,\quad
  k \in \{1,\ldots,d\},
\end{equation*}
and \(\delta_s \ge 0\) is a small parameter (in practice, taking \(\delta_s \le 0.5\) 
often suffices to ensure stability).
Condition 
\eqref{basis-stability-condition} forces \(\varphi_{T,k}\) to remain relatively 
concentrated around \(T\) and predominantly in its \(k\)-th component. 
This property promotes enough linear independence among the basis functions 
to stabilize the resulting system; see \cite{GarayMohrPeterseimetal} 
for further details on why \eqref{basis-stability-condition} implies basis stability.
In the next section, we develop superlocalized basis functions that satisfy 
\eqref{basis-stability-condition}, yielding a stable basis whose localization errors 
decay \emph{superexponentially} with $m$.

\section{Construction of a stable superlocalized basis}\label{s:slod}

In this section, we detail the construction of such superlocalized basis functions. 
We first introduce a variant of the classical LOD method 
that produces a stable basis with \emph{exponentially} decaying functions. 
Building on the ideas from the previous discussion, this LOD variant ensures the 
LOD basis functions act as (locally) regularized companions of functions in 
$(\mathbb{Q}^0(\Omega))^d$. We then present a \emph{superlocalization} strategy, 
extending the LOD theory to achieve basis functions whose localization errors 
decay superexponentially in $m$.

\subsection{Construction of the LOD basis}\label{section:SLOD}
Using the energy-minimization saddle-point formulation of the LOD method \cite{Mai20ppt,OwhS19}, we have for every $T\in \mathcal{T}_{H,\omega}$ and $k\in \{1,\ldots,d\}$ that the local LOD function $\bar{\psi}^{\text{LOD}}_{T,k}$ solves
\begin{equation}\label{LOD-Saddle-point-formulation}
\begin{pmatrix}
\mathcal{A}_{h,\omega} & \mathcal{P}^T\\
\mathcal{P} & \mathbf{0}\\
\end{pmatrix}
\begin{pmatrix}
\bar{\psi}^{\text{LOD}}_{T,k}\\
\lambda
\end{pmatrix}= \begin{pmatrix}
\mathbf{0}\\
\mathbf{e}_k \chi_{T} 
\end{pmatrix},
\end{equation}
where $\mathcal{A}_{h,\omega}:\Vh_{h,\omega}\rightarrow \left[\Vh_{h,\omega}\right]'$, $\wb \mapsto a_{\omega}(\wb,\cdot)$, $\mathcal{P}:\Vh_{h,\omega}\rightarrow \big(\mathbb{Q}^{0}(\mathcal{T}_{H,\omega})\big)^d$, $\wb \mapsto \Pi_{H,\omega}\wb$, and $\mathcal{P}^{T}:\big(\mathbb{Q}^{0}(\mathcal{T}_{H,\omega})\big)^d\rightarrow \left[\Vh_{h,\omega}\right]'$ such that $\langle \mathcal{P}^{T} \boldsymbol{p}, \vb \rangle = (\boldsymbol{p},\vb)_{L^2(\omega)}$ for all $\boldsymbol{p}\in (\mathbb{Q}^{0}(\mathcal{T}_{H,\omega}))^d$ and $\vb\in \Vh_{h,\omega}$. Let $\mathcal{D}:\big(\mathbb{Q}^{0}(\mathcal{T}_{H,\omega})\big)^d\rightarrow \big(\mathbb{Q}^{0}(\mathcal{T}_{H,\omega})\big)^d$ such that $\mathcal{D}=\mathcal{P}\circ \mathcal{A}_{h,\omega}^{-1}\circ \mathcal{P}^T$. It can be shown that $\mathcal{D}$ is invertible; see \cite{HaPe21b}. Thus, eliminating $\lambda$ in \eqref{LOD-Saddle-point-formulation} and solving for $\bar{\psi}^{\text{LOD}}_{T,k}$ yields
\begin{equation}\label{Local LOD function}
\bar{\psi}^{\text{LOD}}_{T,k}=\mathcal{A}^{-1}_{h,\omega}\gb_{T,k} \quad \mathrm{with} \quad \gb_{T,k} =\mathcal{D}^{-1}\eb_k\chi_{T}.
\end{equation}

With the same arguments as in \cite[Theorem 4.1]{Mai20ppt} and \cite[Theorem 3.15]{AHP21} but with vector-valued cut-off functions instead of scalar ones (as in \cite[Section 4]{HeP16}), and using the inverse estimate $\|\nabla \vb\|_{L^2(\omega)}\leq C_{\sharp} H^{-1}\|\vb\|_{L^2(\omega)}$ for all $\vb \in \Vh_{h,\omega}$ (with $C_{\sharp}$ independent of $H$), we arrive at the following theorem expressing the exponential decay of local {\em vector-valued} LOD basis functions.

\begin{theorem}\label{LOD basis decay}
Consider $\omega=\omega_T^{(m)}$ for $T\in \mathcal{T}_H$ and $m>1$, and let $\bar{\psi}^{\text{LOD}}_{T,k}\in \Vh_{h,\omega}$ and $\gb_{T,k}\in (\mathbb{Q}^0(\omega))^d$ be defined as in \eqref{Local LOD function}. Then for $r\leq m$ it holds that
\begin{equation}
\|\nabla \bar{\psi}^{\text{LOD}}_{T,k}\|_{L^2(\omega\setminus \omega_{T}^{(r)})}\leq C_{\dagger}H^{-1}e^{-Cr}\|\gb_{T,k}\|_{L^2(\omega)},
\end{equation}
where the constants $C_{\dagger}$ and $C$ depend on $\alpha$ and $\beta$ but do not depend on $H$, $m$, and $r$.
\end{theorem}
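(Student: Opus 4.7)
The plan is to adapt the classical LOD decay argument (as carried out in the proofs of \cite[Theorem 4.1]{Mai20ppt} and \cite[Theorem 3.15]{AHP21}) to the vector-valued elasticity setting, with Korn's inequality and vector-valued cutoffs replacing their scalar counterparts. The starting point is the Galerkin-orthogonality hidden in the saddle-point formulation \eqref{LOD-Saddle-point-formulation}: eliminating the Lagrange multiplier shows that $\bar{\psi}^{\text{LOD}}_{T,k}$ is the unique minimizer of $\frac12 a_{\omega}(\cdot,\cdot)$ over the affine set $\{\vb\in\Vh_{h,\omega}: \Pi_{H,\omega}\vb=\eb_k\chi_T\}$, so the KKT conditions give $a_{\omega}(\bar{\psi}^{\text{LOD}}_{T,k},\vb)=0$ for every $\vb\in \Vh_{h,\omega}\cap \ker\Pi_{H,\omega}$. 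This orthogonality is the engine of the decay argument.

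Next, for each radius $r'\le r$, I would introduce a scalar Lipschitz cutoff $\eta_{r'}$ satisfying $\eta_{r'}\equiv 0$ on $\omega_T^{(r'-1)}$, $\eta_{r'}\equiv 1$ on $\omega\setminus \omega_T^{(r')}$, and $\|\nabla\eta_{r'}\|_{L^{\infty}}\le C H^{-1}$. The candidate test function $\eta_{r'}\,\bar{\psi}^{\text{LOD}}_{T,k}$ is neither in $\Vh_{h,\omega}$ nor in $\ker\Pi_{H,\omega}$, so I would apply a componentwise fine-scale interpolant $I_h$ and subtract a local correction $\boldsymbol{z}_{r'}\in\Vh_{h,\omega}$ supported in the annulus $A_{r'}:=\omega_T^{(r')}\setminus\omega_T^{(r'-1)}$, built cellwise as in the standard LOD literature, so that $\vb_{r'}:= I_h(\eta_{r'}\,\bar{\psi}^{\text{LOD}}_{T,k})-\boldsymbol{z}_{r'}\in\Vh_{h,\omega}\cap\ker\Pi_{H,\omega}$ with $\|\nabla\boldsymbol{z}_{r'}\|_{L^2(\omega)}\lesssim H^{-1}\,\|\bar{\psi}^{\text{LOD}}_{T,k}\|_{L^2(A_{r'})}$. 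Inserting $\vb_{r'}$ into the orthogonality, expanding
\begin{equation*}
\varepsilon(\eta_{r'}\,\bar{\psi}^{\text{LOD}}_{T,k})=\eta_{r'}\,\varepsilon(\bar{\psi}^{\text{LOD}}_{T,k})+\tfrac12\bigl(\nabla\eta_{r'}\otimes \bar{\psi}^{\text{LOD}}_{T,k}+\bar{\psi}^{\text{LOD}}_{T,k}\otimes\nabla\eta_{r'}\bigr),
\end{equation*}
and combining the ellipticity bounds \eqref{A-alpha-beta-bounds}, Young's inequality, the cutoff bound, Korn's inequality on $A_{r'}$ to pass from $\varepsilon$ to $\nabla$, and the hypothesized inverse estimate, produces a one-step Caccioppoli-type bound
\begin{equation*}
\|\nabla\bar{\psi}^{\text{LOD}}_{T,k}\|_{L^2(\omega\setminus \omega_T^{(r')})}^2\le K\,\|\nabla \bar{\psi}^{\text{LOD}}_{T,k}\|_{L^2(A_{r'})}^2
\end{equation*}
for a constant $K$ depending only on $\alpha$, $\beta$, $C_{\sharp}$ and the Korn constant from \eqref{Gradient-Strain-Dirichlet}.

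Finally, writing $A_{r'}=(\omega\setminus\omega_T^{(r'-1)})\setminus(\omega\setminus\omega_T^{(r')})$ recasts the one-step bound as the geometric contraction
\begin{equation*}
\|\nabla\bar{\psi}^{\text{LOD}}_{T,k}\|_{L^2(\omega\setminus\omega_T^{(r')})}^2\le \frac{K}{K+1}\,\|\nabla\bar{\psi}^{\text{LOD}}_{T,k}\|_{L^2(\omega\setminus\omega_T^{(r'-1)})}^2,
\end{equation*}
which iterated $r$ times gives $\|\nabla\bar{\psi}^{\text{LOD}}_{T,k}\|_{L^2(\omega\setminus\omega_T^{(r)})}\le e^{-Cr}\,\|\nabla\bar{\psi}^{\text{LOD}}_{T,k}\|_{L^2(\omega)}$. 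The claimed bound then follows by combining this with the a priori estimate $\|\nabla\bar{\psi}^{\text{LOD}}_{T,k}\|_{L^2(\omega)}\le C_{\dagger} H^{-1}\,\|\gb_{T,k}\|_{L^2(\omega)}$, obtained from $a_{\omega}(\bar{\psi}^{\text{LOD}}_{T,k},\bar{\psi}^{\text{LOD}}_{T,k})=(\gb_{T,k},\bar{\psi}^{\text{LOD}}_{T,k})_{L^2(\omega)}$ together with Cauchy--Schwarz, Korn, and the inverse estimate of the hypothesis. I expect the main obstacle to be the careful bookkeeping of Korn's inequality on the annular subdomains $A_{r'}$: because Korn constants can in principle depend on the geometry of the annulus, ensuring that they remain uniform in $H$, $m$, and $r$ so that the rate $C$ is genuinely mesh-independent is the delicate point that distinguishes the vector-valued proof from its scalar counterpart.
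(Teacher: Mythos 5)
Your overall strategy coincides with the paper's: the paper gives no proof of Theorem~\ref{LOD basis decay}, but simply asserts that it follows by the arguments of the cited scalar LOD decay proofs combined with cut-off functions and the stated inverse estimate, and your reconstruction (orthogonality of $\bar{\psi}^{\text{LOD}}_{T,k}$ to $\Vh_{h,\omega}\cap\ker\Pi_{H,\omega}$, cut-off plus fine-scale interpolation plus a local correction in $\ker\Pi_{H,\omega}$, a Caccioppoli-type one-step bound, geometric iteration, and an a priori energy bound) is exactly that standard argument.

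The one step that would fail as written is ``Korn's inequality on $A_{r'}$ to pass from $\varepsilon$ to $\nabla$.'' The inequality \eqref{Gradient-Strain-Dirichlet} is a \emph{first} Korn inequality and holds only for fields vanishing on the entire boundary; on an annulus $A_{r'}$ it is false (an infinitesimal rigid rotation has $\varepsilon\equiv 0$ but $\nabla\neq 0$), and the second Korn inequality one would need there carries a constant depending on the geometry of $A_{r'}$ --- precisely the non-uniformity you flag but do not resolve. The standard fix, and the reason the paper points to \cite[Section 4]{HeP16}, is to never invoke Korn on a subdomain: the lower bound in \eqref{A-alpha-beta-bounds} controls $\|\varepsilon(\cdot)\|_{L^2}$ locally without Korn, and the passage from $\varepsilon$ to $\nabla$ is performed only for the product $\eta_{r'}\,\bar{\psi}^{\text{LOD}}_{T,k}$, which extended by zero lies in $H^1_0(\Omega;\Rd)$, so that \eqref{Gradient-Strain-Dirichlet} applies with the universal constant $\sqrt{2}$; since $\eta_{r'}\equiv 1$ on $\omega\setminus\omega_T^{(r')}$, this still controls $\|\nabla \bar{\psi}^{\text{LOD}}_{T,k}\|_{L^2(\omega\setminus\omega_T^{(r')})}$. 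Two smaller corrections: the a priori bound $\|\nabla\bar{\psi}^{\text{LOD}}_{T,k}\|_{L^2(\omega)}\leq C H^{-1}\|\gb_{T,k}\|_{L^2(\omega)}$ does not come from the inverse estimate (which points the wrong way) but from coercivity together with the Friedrichs inequality on $\omega$, the factor $H^{-1}$ being a crude majorant; the inverse estimate is instead what you need (and implicitly use) to absorb the $H^{-1}\|\bar{\psi}^{\text{LOD}}_{T,k}\|_{L^2(A_{r'})}$ terms produced by $\nabla\eta_{r'}$ and by the correction $\boldsymbol{z}_{r'}$ into $\|\nabla\bar{\psi}^{\text{LOD}}_{T,k}\|_{L^2(A_{r'})}$ --- alternatively, since $\Pi_{H,\omega}\bar{\psi}^{\text{LOD}}_{T,k}=\eb_k\chi_T$ vanishes on $A_{r'}$ for $r'\geq 2$, an elementwise Poincar\'e inequality achieves the same without any inverse estimate.
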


Using \cref{LOD basis decay} together with the arguments given in \cite[Lemma A.2]{HaPe21b}, we obtain the following result for the traction conormal derivative of vector-valued LOD basis functions.

\begin{lemma}
Let $\bar{\psi}^{\text{LOD}}_{T,k}\in \Vh_{h,\omega}$ and $\gb_{T,k}\in (\mathbb{Q}^0(\omega))^d$ be defined as in \eqref{Local LOD function} and $\omega=\omega_{T}^{(m)}$. Then the dual norm of the traction conormal derivative of $\bar{\psi}^{\text{LOD}}_{T,k}$ has the bound
\begin{equation}
\|\gamma_{\bar{\psi}^{\text{LOD}}_{T,k}}\|_{X'_{\omega}}\leq C_{*}H^{-1}e^{-Cm},
\end{equation}
where the constants $C_{*}$ and $C$ depend on $\alpha$ and $\beta$ but do not depend on $H$ and $m$.
\end{lemma}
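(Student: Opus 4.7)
The plan is to exploit the freedom in the choice of lift in the definition \eqref{Def-Conormal-Der}. For any $\wb\in X_\omega$ and any $\vb\in\widetilde{\mathbf{V}}_{h,\omega}$ with $\text{tr}_{\Sigma_\omega}\vb=\wb$,
\[
\langle \gamma_{\bar{\psi}^{\text{LOD}}_{T,k}},\wb\rangle = a_\omega(\bar{\psi}^{\text{LOD}}_{T,k},\vb) - (\gb_{T,k},\vb)_{L^2(\omega)}.
\]
Because $\bar{\psi}^{\text{LOD}}_{T,k}$ satisfies the Galerkin identity $a_\omega(\bar{\psi}^{\text{LOD}}_{T,k},\cdot)=(\gb_{T,k},\cdot)$ on the zero-trace space $\mathbf{V}_{h,\omega}$, this right-hand side is unchanged when $\vb$ is modified by adding any element of $\mathbf{V}_{h,\omega}$. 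I would therefore pick $\vb$ supported in a thin collar near $\Sigma_\omega$, so that both terms become accessible to the exponential decay estimate of \cref{LOD basis decay}.

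Concretely, first I would introduce a $\mathcal{Q}^1$-continuous cutoff $\eta$ on the coarse mesh with $\eta\equiv 1$ on $\omega\setminus\omega_T^{(m-1)}$, $\eta\equiv 0$ on $\omega_T^{(m-2)}$, and $|\nabla\eta|\leq CH^{-1}$. Starting from the minimizing lift $\vb_\wb\in\widetilde{\mathbf{V}}_{h,\omega}$ of $\wb$ (so that $\|\vb_\wb\|_{H^1(\omega)}=\|\wb\|_{X_\omega}$), I set $\vb:=\mathcal{I}_h(\eta\vb_\wb)$, where $\mathcal{I}_h$ denotes $\mathcal{Q}^1$ nodal interpolation onto $\widetilde{\mathbf{V}}_{h,\omega}$. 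This lift preserves the trace $\wb$ on $\Sigma_\omega$, is supported in the annulus $\omega\setminus\omega_T^{(m-2)}$, and satisfies $\|\vb\|_{H^1(\omega)}\leq C\|\wb\|_{X_\omega}$ by standard cutoff-plus-interpolation estimates combined with a local Poincar\'e--Friedrichs inequality over the transition layer of width comparable to $H$.

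With this test function, both contributions to $\langle\gamma_{\bar{\psi}^{\text{LOD}}_{T,k}},\wb\rangle$ reduce to integrals over $\omega\setminus\omega_T^{(m-2)}$. The first is immediate: Cauchy--Schwarz, \eqref{Equivalence-Energy-H1semi}, and \cref{LOD basis decay} with $r=m-2$ give
\[
|a_\omega(\bar{\psi}^{\text{LOD}}_{T,k},\vb)|\leq CH^{-1}e^{-Cm}\|\gb_{T,k}\|_{L^2(\omega)}\|\wb\|_{X_\omega}.
\]
For the second contribution, I would establish that $\gb_{T,k}$ itself decays away from $T$: since $\gb_{T,k}$ is piecewise constant on $\mathcal{T}_H$, testing the Galerkin identity with element-wise bubble functions $\vb_0\in\mathbf{V}_{h,\omega}$ supported on single coarse elements in $\omega\setminus\omega_T^{(m-3)}$ and invoking \cref{LOD basis decay} again yields $\|\gb_{T,k}\|_{L^2(\omega\setminus\omega_T^{(m-2)})}\leq Ce^{-Cm}\|\gb_{T,k}\|_{L^2(\omega)}$. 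Combining the two estimates, taking the supremum over $\wb$, and absorbing the $H$-uniform factor $\|\gb_{T,k}\|_{L^2(\omega)}$ (controlled by the stability of $\mathcal{D}^{-1}$, cf.\ \cite{HaPe21b}) into $C_*$ produces the claimed bound.

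The main technical obstacle, inherited from the scalar case in \cite{HaPe21b}, is the verification that the cutoff-interpolated lift has an $H^1$-norm bounded by $\|\wb\|_{X_\omega}$ with a constant independent of $H$ and $m$; this requires a careful analysis of the interplay between $\nabla\eta$ of size $H^{-1}$ and the local $L^2$-mass of $\vb_\wb$ in the transition layer. The vector-valued setting introduces only the minor additional step of invoking Korn's inequality \eqref{Gradient-Strain-Dirichlet} when translating between the energy norm in \cref{LOD basis decay} and the $H^1$-seminorm used here; once this is in place, the remainder of the argument is a direct adaptation of the scalar proof in \cite[Lemma A.2]{HaPe21b}.
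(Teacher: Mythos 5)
Your argument is essentially the reconstruction the paper intends: the paper proves this lemma only by citing \cref{LOD basis decay} together with \cite[Lemma~A.2]{HaPe21b}, and that argument is exactly your scheme of exploiting the Galerkin identity on $\mathbf{V}_{h,\omega}$ to replace the lift by one supported in a boundary collar, then feeding the exponential decay of $\nabla\bar{\psi}^{\text{LOD}}_{T,k}$ and of $\gb_{T,k}$ into the two terms of \eqref{Def-Conormal-Der}. One bookkeeping point to tighten: your bubble-function estimate for the decay of $\gb_{T,k}$ yields $\|\gb_{T,k}\|_{L^2(K)}\leq CH^{-1}\|\nabla\bar{\psi}^{\text{LOD}}_{T,k}\|_{L^2(K)}$ per coarse element, so combined with \cref{LOD basis decay} the annulus mass of $\gb_{T,k}$ comes out as $CH^{-2}e^{-Cm}\|\gb_{T,k}\|_{L^2(\omega)}$, not $Ce^{-Cm}\|\gb_{T,k}\|_{L^2(\omega)}$ as you assert; to recover the stated $H^{-1}$ prefactor you should instead invoke the exponential decay of the entries of $\mathcal{D}^{-1}$ directly (as in \cite{HaPe21b}) or track the $H$-scaling of $\|\gb_{T,k}\|_{L^2(\omega)}$ explicitly.
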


From the second line of \eqref{LOD-Saddle-point-formulation}, it follows that the extension by zero of the local LOD basis functions give rise to a stable basis of an approximate solution space of \eqref{weak_form}, satisfying the stability condition \eqref{basis-stability-condition} with $\delta_s=0$.

\subsection{Superlocalization of basis functions} %\CB{it's not clear to me from the text that we actually hae d (d=2) basis functions per patch} \JC{Fixed some typos, this should be clear now}
Consider $\widetilde{T}\in \mathcal{T}_{H,\omega}$, $\widetilde{k} \in \{1,\ldots,d\}$, and $m>1$ fixed, and let $\omega:=\omega_{\widetilde{T}}^{(m)}$. Let $S:=\mathcal{T}_{H,\omega} \times \{1,\ldots,d\} \setminus \big\{(\widetilde{T},\widetilde{k})\big\}$. For $(T,k)\in S $, let $\bar{\psi}_{T,k}^{(\widetilde{T},\widetilde{k})}:=\mathcal{A}^{-1}_{h,\omega}\gb_{T,k}$, where $\gb_{T,k}=\mathcal{D}^{-1}\eb_k\chi_{T}\in (\mathbb{Q}^0(\omega))^d$. With $\mathbf{c}=(c_{T,k})_{(T,k)\in S}$, define $\Psi_{\mathbf{c}}^{(\widetilde{T},\widetilde{k})}\in \Vh_{h,\omega}$ as
\begin{equation*}
\Psi_{\mathbf{c}}^{(\widetilde{T},\widetilde{k})}:=\sum_{(T,k)\in S}c_{T,k} \bar{\psi}_{T,k}^{(\widetilde{T},\widetilde{k})}=\mathcal{A}^{-1}_{h,\omega}\sum_{(T,k) \in S}c_{T,k} \gb_{T,k}.
\end{equation*}
Consider $n_e=\#T_{H,\omega}$. We want to find $\bar{\mathbf{c}}\in \mathbb{R}^{n_e d-1}$ such that $\|\gamma_{\bar{\psi}^{\text{LOD}}_{\widetilde{T},\widetilde{k}}+\Psi_{\bar{\mathbf{c}}}^{(\widetilde{T},\widetilde{k})}}\|_{X'_{\omega}}$ is as small as possible (ideally zero), and then define the (normalized) SLOD basis function associated with  $(\widetilde{T},\widetilde{k})\in \mathcal{T}_H \times \{1,\ldots,d\}$ as
\begin{equation}\label{SLOD basis def}
\hat{\varphi}^{\text{SLOD}}_{\widetilde{T},\widetilde{k}}=\frac{\mathcal{E}_{\omega}\left(\bar{\psi}^{\text{LOD}}_{\widetilde{T},\widetilde{k}}+\Psi_{\bar{\mathbf{c}}}^{(\widetilde{T},\widetilde{k})}\right)}{\|\bar{\psi}^{\text{LOD}}_{\widetilde{T},\widetilde{k}}+\Psi_{\bar{\mathbf{c}}}^{(\widetilde{T},\widetilde{k})}\|_{a_{\omega}}}\cdot
\end{equation}
To make $\|\gamma_{\bar{\psi}^{\text{LOD}}_{\widetilde{T},\widetilde{k}}+\Psi_{\mathbf{c}}^{(\widetilde{T},\widetilde{k})}}\|_{X'_{\omega}}$ small, we could use a similar technique as in \cite{BHP22} to compute the $\|\cdot\|_{X'_{\omega}}$ norm, and then minimize it over the coefficients $\mathbf{c}\in \mathbb{R}^{n_e d-1}$ to obtain $\bar{\mathbf{c}}$. However, for computational-cost efficiency, and based on \eqref{ConormalDer-Norm-definition}, we rather seek $\bar{\mathbf{c}}\in \mathbb{R}^{n_e d-1}$ such that 
\begin{equation}\label{Weekened-boundary-condition}
\sum_{\substack{i\in I_{\Sigma_{\omega}},\\ k\in \{1,\ldots,d\}}}\left\langle \gamma_{\bar{\psi}^{\text{LOD}}_{\widetilde{T},\widetilde{k}}+\Psi_{\bar{\mathbf{c}}}^{(\widetilde{T},\widetilde{k})}},\mathrm{tr}_{\Sigma_{\omega}}\eb_k\phi_i \right\rangle ^2\leq\epsilon
\end{equation}
for a small $\epsilon\geq 0$, where $\Sigma_{\omega}:=\partial \omega\setminus \partial \Omega$, $I_{\Sigma_{\omega}}:=\{i\in \mathbb{N}:x_i\in \Sigma_{\omega}\}$, $x_i$ is the $i$-th nodal point associated with $\mathcal{T}_{h,\omega}$, and $\phi_i$ is the $i$-th $\mathcal{Q}_{1}$ standard basis function of $\widetilde{V}_h:=\{v\in H^1(\omega;\mathbb{R}):v_{|_{T}}\in \mathbb{Q}^{1}(T)\; \text{for all } T\in \mathcal{T}_{h,\omega}\}$ associated with $x_i$.

%\begin{remark}
%Optimizing $\|\gamma_{\bar{\psi}^{\text{LOD}}_{T,k}+\Psi_{\mathbf{c}}^{(T)}}\|_{X'_{\omega}}$ over $\mathbf{c}\in \mathbb{R}^{n_e-1}$ is similar to the optimization problem \eqref{Original-Optim-problem} in the sense that both problems seek to find coefficients that lead to functions with small conormal derivatives. However, they are not the same optimization problem since they have different constraints. Furthermore, the admissible sets of both optimization problems are vector spaces with different dimensions.
%\end{remark}

\begin{remark}
In what follows, whenever $T$ appears without parentheses in a superscript, it denotes the transpose sign.
\end{remark}

With $n_b=\#I_{\Sigma_{\omega}}$, define the matrix $\mathbf{B}\in \mathbb{R}^{(n_{b} d) \times (n_e d)}$ such that
\begin{equation}\label{Bmatrix-TraceCondition}
\mathbf{B}_{ij} = a_{\omega}(\mathcal{A}^{-1}_{h,\omega}\eb_k\chi_{T_q},\eb_s\phi_p)-(\eb_k\chi_{T_q},\eb_s\phi_p)_{L^2(\omega)},
\end{equation}
where $i=n_b(s-1)+p$ and $j=n_e(k-1)+q$. Let $g_{\tau,s}=\sum_{k=1}^d\big(\sum_{T\in \mathcal{T}_{{H,\omega}}}d_{T,k}^{(\tau,s)}\chi_T\big)\eb_k$, with $\tau \in \mathcal{T}_{H,\omega}$ and $s\in\{1,\ldots,d\}$, $\mathbf{d}^{(\tau,s)}=\big(d_{T,k}^{(\tau,s)}\big)_{(T,k)\in \mathcal{T}_{H,\omega}\times \{1,\ldots,d\}}$, and $\mathbf{D}\in \mathbb{R}^{n_e d \times (n_e d -1)}$ such that the $j$-th column of $\mathbf{D}$ is $\mathbf{d}^{(\tau,s)_j}$, with $(\tau,s)_j \in S$. Then, the $\bar{\mathbf{c}}$ providing the smallest $\epsilon$ in \eqref{Weekened-boundary-condition} is the least-squares-error solution of
\begin{equation}\label{linear-system-solved-by-least-squares}
\mathbf{B}\mathbf{D}\mathbf{c}=-\mathbf{B}\mathbf{d}^{\left(\widetilde{T},\widetilde{k}\right)}
\end{equation}
i.e.,
\begin{equation}\label{c-lsq-solution}
\bar{\mathbf{c}}=-\left((\mathbf{B}\mathbf{D})^{T}\mathbf{B}\mathbf{D}\right)^{-1}(\mathbf{B}\mathbf{D})^{T}\mathbf{B}\mathbf{d}^{\left(\widetilde{T},\widetilde{k}\right)}.
\end{equation}
Note that, to guarantee stability of the SLOD basis, we additionally want $\Psi_{\bar{\mathbf{c}}}$ to be such that $\hat{\varphi}^{\text{SLOD}}_{\widetilde{T},\widetilde{k}}$ satisfies \eqref{basis-stability-condition}.
In practice, it is observed that choosing $\bar{\mathbf{c}}$ as in \eqref{c-lsq-solution} does not always satisfy condition \eqref{basis-stability-condition}. Hence, for the sake of stability, we choose $\bar{\mathbf{c}}$ instead as follows. Expressing $(\mathbf{B}\mathbf{D})^{T}\mathbf{B}\mathbf{D}$ in terms of its singular value decomposition we have
\begin{equation}\label{svd-computation}
(\mathbf{B}\mathbf{D})^{T}\mathbf{B}\mathbf{D}=\sum_{i=1}^{r}\sigma_i u_i v_i^T.
\end{equation}
where $\sigma_i$ is the $i$-th singular value, with $\sigma_1\geq\ldots \geq\sigma_r$, $u_i$ is the $i$-th left singular vector, $v_i$ is the $i$-th right singular vector, and $r$ is the rank of $(\mathbf{B}\mathbf{D})^{T}\mathbf{B}\mathbf{D}$. Then, we can make a stable choice of $\bar{\mathbf{c}}$ by taking 
\begin{equation}\label{stable_c_computation}
\bar{\mathbf{c}}_{s}=-\Big(\sum_{i=1}^{r_s^{(\widetilde{T},\widetilde{k})}}\sigma_i^{-1} v_i u_i^T\Big)(\mathbf{B}\mathbf{D})^{T}\mathbf{B}\mathbf{d}^{\left(\widetilde{T},\widetilde{k}\right)},
\end{equation}
where $r_s^{(\widetilde{T},\widetilde{k})}\leq r$ is chosen so that condition \eqref{basis-stability-condition} holds. In fact, for $r_s^{(\widetilde{T},\widetilde{k})}\geq1$ we have
\begin{equation}
\|\bar{\mathbf{c}}_s\|_2\leq \Big\|\sum_{i=1}^{r_s^{(\widetilde{T},\widetilde{k})}}\sigma_i^{-1} v_i u_i^T\Big\|_2 \|\mathbf{B}\mathbf{D}\|_2\|\mathbf{B}\mathbf{d}^{\left(\widetilde{T},\widetilde{k}\right)}\|_2= \sigma^{-1}_{r_s^{(\widetilde{T},\widetilde{k})}}\sqrt{\sigma_1} \|\mathbf{B}\mathbf{d}^{\left(\widetilde{T},\widetilde{k}\right)}\|_2,
\end{equation}
where
\begin{equation}
\|\mathbf{B}\mathbf{d}^{\left(\widetilde{T},\widetilde{k}\right)}\|_2 =\left(
\sum_{\substack{i\in I_{\Sigma_{\omega}},\\ k\in \{1,\ldots,d\}}}\left\langle \gamma_{\bar{\psi}^{\text{LOD}}_{\widetilde{T},\widetilde{k}}},\mathrm{tr}_{\Sigma_{\omega}}\eb_k\phi_i \right\rangle ^2\right)^{\frac{1}{2}}. 
\end{equation}
Thus, the entries of $\bar{\mathbf{c}}_s$ are expected to decrease as $r_s^{(\widetilde{T},\widetilde{k})}$ decreases, making it possible to satisfy the stability condition \eqref{basis-stability-condition}. 

Let $\mathbb{U}\in \mathbb{R}^{n_b d \times n_b d}$ be the (unitary) matrix whose $j$-th column is the $j$-th left singular vector of $\mathbf{B}\mathbf{D}$. Define
\begin{equation}\label{conormal-approx}
\mathbf{t}_{\mathbf{c}}:=\|\mathbf{B}\mathbf{D}\mathbf{c}+\mathbf{B}\mathbf{d}^{\left(\widetilde{T},\widetilde{k}\right)}\|_2=\left( \sum_{\substack{i\in I_{\Sigma_{\omega}},\\ k\in \{1,\ldots,d\}}}\left\langle \gamma_{\bar{\psi}^{\text{LOD}}_{\widetilde{T},\widetilde{k}}+\Psi_{{\mathbf{c}}}^{(\widetilde{T},\widetilde{k})}},\mathrm{tr}_{\Sigma_{\omega}}\eb_k\phi_i \right\rangle ^2\right)^{\frac{1}{2}}.
\end{equation}
Furthermore, let $\mathbf{R}\in \mathbb{R}^{n_b d \times n_b d}$ be the diagonal matrix such that $\mathbf{R}_{ii}=1$ for $r_{s}^{(\widetilde{T},\widetilde{k})}<i\leq r$ and $\mathbf{R}_{ii}=0$ otherwise. Then, from \eqref{conormal-approx}, the triangle inequality, \eqref{c-lsq-solution}, \eqref{stable_c_computation}, and with $\mathbf{y}:=\mathbb{U}^T\mathbf{B}\mathbf{d}^{(\widetilde{T},\widetilde{k})}$ it follows that
\begin{equation*}
\mathbf{t}_{\bar{\mathbf{c}}_s}-\mathbf{t}_{\bar{\mathbf{c}}}\leq \|\mathbf{B}\mathbf{D}\bar{\mathbf{c}}_s-\mathbf{B}\mathbf{D}\bar{\mathbf{c}}\|_2=\|\mathbb{U}\mathbf{R}\mathbb{U}^T\mathbf{B}\mathbf{d}^{(\widetilde{T},\widetilde{k})}\|_2\leq \|\mathbf{R}\mathbb{U}^T\mathbf{B}\mathbf{d}^{(\widetilde{T},\widetilde{k})}\|_2 = \left(\sum_{i=r_{s}^{(\widetilde{T},\widetilde{k})}+1}^{r}\mathbf{y}_{i}^2\right)^{\frac{1}{2}}.
\end{equation*}
This estimate implies that the difference in the basis localization error caused by using $\bar{\mathbf{c}}_s$ instead of $\bar{\mathbf{c}}$ increases as $r_{s}^{(\widetilde{T},\widetilde{k})}$ decreases, and the difference cannot be greater than $\|\mathbf{y}\|_2\leq \|\mathbf{B}\mathbf{d}^{\left(\widetilde{T},\widetilde{k}\right)}\|_2$ (up to a multiplicative constant).

Note that taking $r_s^{(\widetilde{T},\widetilde{k})}=0$ for all $T\in \mathcal{T}_{H}$ yields the LOD basis, which as mentioned above is a stable one. Therefore, there always exists at least one value of $r_s^{(\widetilde{T},\widetilde{k})}$ for which a stable basis can be obtained using this stabilization procedure. However, if $r_s^{(\widetilde{T},\widetilde{k})}$ is too small, the superexponential decay of basis functions might be lost, as expected. Nevertheless, even in such cases, the basis functions still exhibit exponentially decaying properties. Hence, we want to choose $\delta_s$ just small enough to achieve basis stability and such that $r_s^{(\widetilde{T},\widetilde{k})}$ remains sufficiently large to preserve the superlocalization properties. The value of $r_s^{(\widetilde{T},\widetilde{k})}$ is obtained by an iterative process which involves discarding the smallest singular value $\sigma_i$ in \eqref{stable_c_computation} at each iteration until condition \eqref{basis-stability-condition} is satisfied.

\section{Stability and convergence analysis of the SLOD Method}\label{s:anal}
Having described the construction of superlocalized basis functions, we are ready to introduce the practical SLOD method. 
Let $$\hat{\mathcal{V}}_H=\mathrm{span}\{\hat{\varphi}^{\text{SLOD}}_{T,k}\;:\;T\in \mathcal{T}_H,\;k\in\{1,\ldots,d\}\}$$
be the superlocalized approximation space. Then the SLOD method seeks $\hat{\ub}_H \in \hat{\mathcal{V}}_H$ such that
\begin{equation}\label{SLOD weak form}
a(\hat{\ub}_H,\vb)=(\fb,\vb)_{L^2(\Omega)}\quad \text{for all }\vb \in \hat{\mathcal{V}}_H.
\end{equation}
In what follows, we derive an estimate of the energy error between the SLOD approximation $\hat{\ub}_H$ and $\ub$ (solution of \eqref{weak_form}), as well as a condition number bound for the stiffness matrix associated with the SLOD basis. Together, these results establish the accuracy and stability of the SLOD method.

\subsection{Energy error estimate}
Before analyzing the error of our SLOD approximation, we state a lemma that bounds the 
$2$-norm of the coefficient vectors arising from the expansion of a function in terms of a basis. This bound will be essential 
both for deriving the SLOD error estimates and for establishing the condition number of the 
corresponding stiffness matrix. The argument follows from applying Rayleigh quotient bounds 
to the Gram matrix of a basis.

\begin{lemma}\label{Riesz-basis}
Let $\mathcal{B}=\{b_{i}\}_{i\in\{1,\ldots,n\}}$ be a basis of an inner product space $\mathcal{V}$ with norm $\|\cdot\|_{\mathcal{V}}$ induced by the inner product $(\cdot,\cdot)_{\mathcal{V}}$. Then $\{b_{i}\}_{i\in\{1,\ldots,n\}}$ is a Riesz basis, i.e., there exists constants $0 \leq C_1 \leq C_2$ such that for any finite sequence of real numbers $(c_i)_{i\in\{1,\ldots,n\}}$ we have
\begin{equation*}
C_1 \sum_{i=1}^{n}|c_i|^2\leq \left\|\sum_{i=1}^{n}c_i b_{i}\right\|_{\mathcal{V}}^2\leq C_2 \sum_{i=1}^{n}|c_i|^2.
\end{equation*}
Moreover, the bounds are tight by taking $C_1$ and $C_2$ as the smallest and largest eigenvalues of the basis Gram matrix $\widetilde{\mathbf{B}}\in \mathbb{R}^{n \times n}$ such that $\widetilde{\mathbf{B}}_{ij}=(b_{i},b_{j})_{\mathcal{V}}$.
\end{lemma}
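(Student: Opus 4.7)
The plan is to reduce the claim to a standard Rayleigh quotient bound for the Gram matrix. First I would expand the squared norm using bilinearity of the inner product. Writing $\mathbf{c} = (c_1,\ldots,c_n)^T \in \mathbb{R}^n$, one has
\begin{equation*}
\left\|\sum_{i=1}^{n} c_i b_i\right\|_{\mathcal{V}}^2
= \sum_{i,j=1}^{n} c_i c_j\,(b_i,b_j)_{\mathcal{V}}
= \mathbf{c}^T \widetilde{\mathbf{B}}\, \mathbf{c},
\end{equation*}
so the entire statement is really a statement about the quadratic form associated with the Gram matrix $\widetilde{\mathbf{B}}$.

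Next I would record two structural properties of $\widetilde{\mathbf{B}}$. Symmetry is immediate from symmetry of the inner product. Positive definiteness follows from linear independence of the basis: for any nonzero $\mathbf{c}$, the element $\sum c_i b_i$ is nonzero, and hence $\mathbf{c}^T\widetilde{\mathbf{B}}\mathbf{c} = \|\sum c_i b_i\|_{\mathcal{V}}^2 > 0$. Consequently, all eigenvalues of $\widetilde{\mathbf{B}}$ are real and strictly positive; denote them $0 < \lambda_{\min} = \lambda_1 \le \cdots \le \lambda_n = \lambda_{\max}$.

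The two-sided bound then follows from the min--max (Rayleigh) characterization of extremal eigenvalues of a symmetric matrix, which yields
\begin{equation*}
\lambda_{\min}\,\|\mathbf{c}\|_2^2 \;\le\; \mathbf{c}^T \widetilde{\mathbf{B}}\, \mathbf{c} \;\le\; \lambda_{\max}\,\|\mathbf{c}\|_2^2
\quad\text{for all } \mathbf{c}\in \mathbb{R}^n,
\end{equation*}
with $\|\mathbf{c}\|_2^2 = \sum_{i=1}^n |c_i|^2$. Setting $C_1 = \lambda_{\min}$ and $C_2 = \lambda_{\max}$ gives the claimed inequality.

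For the tightness assertion, I would note that equality on the left is achieved by choosing $\mathbf{c}$ to be a unit eigenvector associated with $\lambda_{\min}$, and equality on the right by choosing a unit eigenvector associated with $\lambda_{\max}$; the orthogonal diagonalization of $\widetilde{\mathbf{B}}$ guarantees such eigenvectors exist. I do not anticipate any real obstacle here, since the argument is entirely linear-algebraic; the only point that deserves a brief justification is the passage from ``$\mathcal{B}$ is a basis'' to ``$\widetilde{\mathbf{B}}$ is positive definite'', which I handle as above.
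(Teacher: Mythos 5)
Your proof is correct and follows exactly the route the paper indicates: the paper does not write out a detailed argument but states that the lemma ``follows from applying Rayleigh quotient bounds to the Gram matrix of a basis,'' which is precisely your reduction of the squared norm to the quadratic form $\mathbf{c}^T\widetilde{\mathbf{B}}\mathbf{c}$ followed by the min--max characterization of the extremal eigenvalues. Your added justification that linear independence of the basis forces $\widetilde{\mathbf{B}}$ to be positive definite (so that in fact $C_1>0$, sharpening the stated $C_1\ge 0$) is a welcome detail the paper leaves implicit.
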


Define 
\begin{equation}\label{sigma-def}
\widetilde{\sigma}:= \max_{(T,k)\in \mathcal{T}_H \times \{1,\ldots,d\}} \|\gamma_{\hat{\varphi}^{\mathrm{SLOD}}_{T,k_{|_{\omega_{T}}}}}\|_{X'_{\omega_{T}}}.
\end{equation}
Also, let $N_H:=\# \mathcal{T}_H$ and define
\begin{equation}\label{Basis single index}
\hat{\varphi}_i:=\hat{\varphi}^{\text{SLOD}}_{T_q,k}\quad\quad\text{ and } \quad \quad\; \varphi_i:=\mathcal{A}_h \gb^{\text{SLOD}}_{T_q,k},
\end{equation}
where $i=(k-1)N_H+q$, $T_q\in \mathcal{T}_H$, $k\in\{1,\ldots,d\}$, $\hat{\varphi}^{\text{SLOD}}_{T_q,k}$ is as defined in \eqref{SLOD basis def}, and $\gb^{\text{SLOD}}_{T_q,k}$ is the extension by zero of the $\mathbb{Q}^0$-companion of the restriction of $\hat{\varphi}^{\text{SLOD}}_{T_q,k}$ to its supporting patch. Further, consider $\hat{\mathcal{B}}:=\{\hat{\varphi}_i\;:\;i=1,\ldots,N_H d\}$.

Now we are ready to present the theorem providing the error estimate for the approximate solution of \eqref{weak_form} obtained with the SLOD basis.

\begin{theorem}
Let $\ub \in H^1_0(\Omega;\Rd)$ be the solution of \eqref{weak_form}, $\ub_h\in \Vh_h(\Omega)$ the solution of \eqref{Galerkin-approx}, and $\hat{\ub}_H$ the solution of \eqref{SLOD weak form}. Then, the following approximation estimate holds. 
\begin{eqnarray}\label{S-hat-Error-Estimate}
\nonumber \|\ub-\hat{\ub}_H\|_a\leq \|\ub-\ub_h\|_a &+& \frac{H}{\pi \sqrt{\alpha}} \left\|\fb-\Pi_{H}\fb\right\|_{L^2(\Omega)}\\ &+& \frac{\sqrt{N_Ed} \big(1+\frac{\mathrm{diam}(\Omega)}{\pi}\big)}{\sqrt{\alpha}} \tilde{\sigma} \frac{\left\|\fb\right\|_{L^2(\Omega)}}{\sqrt{\lambda_{\mathrm{min}}(\mathbf{G})}},
\end{eqnarray}
where $\widetilde{\sigma}$ is given in \eqref{sigma-def}, $N_E$ is the largest number of elements that can possibly be contained within the supporting patches of basis functions, and $\mathbf{G}\in \mathbb{R}^{N_H d\times N_H d}$ is such that \ $\mathbf{G}_{ij}=(\gb_{i},\gb_{j})_{L^2(\Omega)}$, with $\{\gb_i\}_{i\in\{1,\ldots,N_L\}}\subset (\mathbb{Q}^{0}(\mathcal{T}_{H}))^d$ being the basis companion of $\{\varphi_i\}_{i\in\{1,\ldots,N_H\}}$.
\end{theorem}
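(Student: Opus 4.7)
The plan is to compare $\hat{\ub}_H$ with $\ub$ via two intermediate approximations: the fine-scale Galerkin solution $\ub_h$ and the \emph{ideal} multiscale solution $\ub_H := \mathcal{A}^{-1}_h(\Pi_H \fb) \in \mathcal{V}_H$. Since $a$ is symmetric and coercive, $\hat{\ub}_H$ is the $a$-orthogonal projection of $\ub$ onto $\hat{\mathcal{V}}_H$, hence $\|\ub - \hat{\ub}_H\|_a \leq \|\ub - \vb\|_a$ for any $\vb \in \hat{\mathcal{V}}_H$. A triangle inequality through $\ub_h$ and $\ub_H$ reduces the task to controlling $\|\ub_h - \ub_H\|_a$ and, for a cleverly chosen $\vb$, $\|\ub_H - \vb\|_a$, while $\|\ub - \ub_h\|_a$ already appears on the right-hand side of \eqref{S-hat-Error-Estimate}.

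For the middle term, set $\eb := \ub_h - \ub_H$. The Galerkin identities satisfied by both summands on $\Vh_h$ give $a(\eb, \wb) = (\fb - \Pi_H \fb, \wb)_{L^2(\Omega)}$ for all $\wb \in \Vh_h$. Taking $\wb = \eb$, I would exploit the $L^2$-orthogonality of $\Pi_H$ to replace $\eb$ by $(I - \Pi_H)\eb$ on the right. Combining this with the elementwise Poincaré estimate $\|(I - \Pi_H)\wb\|_{L^2(T)} \leq (H/\pi)\|\nabla \wb\|_{L^2(T)}$ and the coercivity \eqref{Equivalence-Energy-H1semi}, one absorbs one factor of $\|\eb\|_a$ on the left to obtain $\|\ub_h - \ub_H\|_a \leq H/(\pi \sqrt{\alpha})\,\|\fb - \Pi_H \fb\|_{L^2(\Omega)}$, which is exactly the second term in \eqref{S-hat-Error-Estimate}.

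To construct $\vb$, I use that $\{\gb_i\}_{i=1}^{N_H d}$ is a basis of $(\mathbb{Q}^0(\mathcal{T}_H))^d$ (a consequence of the basis stability condition \eqref{basis-stability-condition}), so there are unique $c_i$ with $\Pi_H \fb = \sum_i c_i \gb_i$; linearity of $\mathcal{A}^{-1}_h$ then gives $\ub_H = \sum_i c_i \varphi_i$, and I set $\vb := \sum_i c_i \hat{\varphi}_i \in \hat{\mathcal{V}}_H$. The difference $\ub_H - \vb = \sum_i c_i(\varphi_i - \hat{\varphi}_i)$ is a sum of localization errors; following the computation in the proof of \cref{Lemma-Localization-Error-bound-conormalDer}, for every $\wb \in \Vh$,
\begin{equation*}
a(\varphi_i - \hat{\varphi}_i, \wb) = -\langle \gamma_{\hat{\varphi}_{i|_{\omega_i}}}, \mathrm{tr}_{\Sigma_{\omega_i}}(\wb_{|_{\omega_i}})\rangle,
\end{equation*}
which is bounded term-wise by $\widetilde{\sigma}\,\|\wb\|_{H^1(\omega_i)}$ via \eqref{ConormalDer-Norm-definition} and \eqref{trace-norm-continuity}. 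Summing in $i$ and applying Cauchy--Schwarz yields $a(\ub_H - \vb, \wb) \leq \widetilde{\sigma}\,\|\mathbf{c}\|_2\,(\sum_i \|\wb\|_{H^1(\omega_i)}^2)^{1/2}$.

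The main technical step is then a finite-overlap count: since each coarse element $T$ lies in at most $N_E$ patches $\omega_{T'}$ (by the symmetry $T \subset \omega_{T'} \Leftrightarrow T' \subset \omega_T$ for Cartesian $m$-th order patches) and each patch-element pair carries $d$ components, the double sum collapses to at most $N_E d\,\|\wb\|_{H^1(\Omega)}^2$. Passing to the energy norm via the Friedrichs inequality and \eqref{Equivalence-Energy-H1semi}, then testing with $\wb = \ub_H - \vb$ and absorbing one factor of $\|\ub_H - \vb\|_a$, yields the prefactor $\sqrt{N_E d}\,(1+\mathrm{diam}(\Omega)/\pi)/\sqrt{\alpha}\,\widetilde{\sigma}$. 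Finally, \cref{Riesz-basis} applied to the basis $\{\gb_i\}$ with Gram matrix $\mathbf{G}$ gives $\|\mathbf{c}\|_2 \leq \|\Pi_H \fb\|_{L^2}/\sqrt{\lambda_{\min}(\mathbf{G})} \leq \|\fb\|_{L^2}/\sqrt{\lambda_{\min}(\mathbf{G})}$. Combining the three contributions delivers \eqref{S-hat-Error-Estimate}. The main obstacle I anticipate is precisely the finite-overlap counting argument: verifying that the natural combinatorial constant from the Cartesian-patch geometry is exactly $N_E d$ (and not a larger shape-dependent constant), and tracking the Poincaré/coercivity factors so that they collapse cleanly to the stated $(1+\mathrm{diam}(\Omega)/\pi)/\sqrt{\alpha}$.
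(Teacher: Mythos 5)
Your proposal follows essentially the same route as the paper's proof: Céa's lemma plus the triangle inequality through $\ub_h$ and $\tilde{\ub}=\mathcal{A}^{-1}_h\Pi_H\fb$, the Galerkin/$L^2$-orthogonality argument with the Poincar\'e estimate for the middle term, and for the localization term the same choice $\hat{\wb}=\sum_i c_i\hat{\varphi}_i$ combined with the conormal-derivative bound of \cref{Lemma-Localization-Error-bound-conormalDer}, the $N_E d$ overlap count, and \cref{Riesz-basis} for $\|\mathbf{c}\|_2$. The argument is correct and matches the paper's in all essential steps.
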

\begin{proof}
Let $\fb\in L^2(\Omega;\Rd)$ and $\tilde{\ub}=\mathcal{A}^{-1}_{h}\Pi_{H}\fb$. C\'ea's lemma establishes that
\begin{equation*}
\|\ub-\hat{\ub}_H\|_a\leq \|\ub-\hat{\wb}\|_a \;\;\;\;\;\;\text{for all } \hat{\wb}\in \hat{\mathcal{V}}_{H}.
\end{equation*}
Thus, using C\'ea's lemma and the triangle inequality, we obtain
\begin{equation}\label{S-hat-TriangIneq-bound}
\|\ub-\hat{\ub}_H\|_a \leq \|\ub-\ub_h\|_a+\|\ub_h-\tilde{\ub}\|_a+\|\tilde{\ub}-\hat{\wb}\|_a,
\end{equation}
where $\hat{\wb}\in \hat{\mathcal{V}}_{H}$ is arbitrary. To obtain the error estimate, we derive bounds for the last two terms on the r.h.s.\ of the above inequality. 

From \eqref{weak_form}, the definition of $\tilde{\ub}$, the Cauchy-Schwarz inequality, \eqref{A-alpha-beta-bounds}, the property $\|\vb-\Pi_{H}\vb\|_{L^2(\Omega)}\leq \pi^{-1}H \|\nabla \vb\|_{L^2(\Omega)}$ for all $\vb\in H^1(\Omega;\Rd)$, and noticing that $\ub_h-\tilde{\ub}\in \Vh_h \subset H^{1}_{0}(\Omega;\Rd)$, we have
\begin{eqnarray}\label{S-hat-error-bound-rhs1}
\nonumber \|\ub_h-\tilde{\ub}\|^{2}_{a}=a(\ub_h-\tilde{\ub},\ub_h-\tilde{\ub})&=&a(\ub_h,\ub_h-\tilde{\ub})-a(\tilde{\ub},\ub_h-\tilde{\ub})\\
\nonumber &=&(\fb,\ub_h-\tilde{\ub})_{L^2(\Omega)}-(\Pi_{H}\fb,\ub_h-\tilde{\ub})_{L^2(\Omega)}\\
\nonumber &=&(\fb-\Pi_{H}\fb,\ub_h-\tilde{\ub})_{L^2(\Omega)}\\
\nonumber &=&(\fb-\Pi_{H}\fb,\ub_h-\tilde{\ub}-\Pi_{H}(\ub_h-\tilde{\ub}))_{L^2(\Omega)}\\
\nonumber &\leq & \|\fb-\Pi_{H}\fb\|_{L^2(\Omega)}\|\ub_h-\tilde{\ub}-\Pi_{H}(\ub_h-\tilde{\ub})\|_{L^2(\Omega)}\\
&\leq & \frac{H}{\pi \sqrt{\alpha}} \left\|\fb-\Pi_{H}\fb\right\|_{L^2(\Omega)}\|\ub_h-\tilde{\ub}\|_{a}.
\end{eqnarray}
Let $\Pi_{H}\fb=\sum_{i=1}^{N_H}c_i \gb_i$. From the definition of $\tilde{\ub}$, and noticing that $\varphi_i=\mathcal{A}_{h}^{-1}\gb_i$ (since $\gb_i$ is the $\mathbb{Q}^0$-companion of $\varphi_i$), we obtain $\tilde{\ub}=\sum_{i=1}^{N_H}c_i \varphi_i$. With $\hat{\wb}=\sum_{i=1}^{N_H}c_i \hat{\varphi}_i$, $\omega_i=\mathrm{supp}(\hat{\varphi}_i)$, the Cauchy-Schwarz inequality, \Cref{Lemma-Localization-Error-bound-conormalDer}, \eqref{sigma-def}, Lemma~\ref{Riesz-basis}, and since $\|\Pi_{H}\fb\|_{L^2(\Omega)}\leq \|\fb\|_{L^2(\Omega)}$, we can bound the third r.h.s.\ term of \eqref{S-hat-TriangIneq-bound} as follows:
\begin{eqnarray}\label{S-hat-error-bound-rhs2}
\nonumber \|\tilde{\ub}-\hat{\wb}\|_{a}^2&=& \sum_{i=1}^{N_H}c_i a\left(\varphi_{i}-\hat{\varphi}_{i},\tilde{\ub}-\hat{\wb}\right) \\
\nonumber &\leq & \left(\sum_{i=1}^{N_H}c_i^2\right)^{\frac{1}{2}} \left(\sum_{i=1}^{N_H}a\left(\varphi_{i}-\hat{\varphi}_{i},\tilde{\ub}-\hat{\wb}\right)^2 \right)^{\frac{1}{2}} \\
\nonumber &\leq & \frac{\left(1+\frac{\mathrm{diam}(\Omega)}{\pi}\right)}{\sqrt{\alpha}} \tilde{\sigma} \left(\sum_{i=1}^{N_H}\|(\tilde{\ub}-\hat{\wb})_{|_{\omega_i}}\|^2_{a_{\omega_i}}\right)^{\frac{1}{2}}\left(\sum_{i=1}^{N_H}c_i^2\right)^{\frac{1}{2}}\\
&\leq & \frac{\left(1+\frac{\mathrm{diam}(\Omega)}{\pi}\right)}{\sqrt{\alpha}}  \tilde{\sigma}  \left(N_Ed\|(\tilde{\ub}-\hat{\wb}\|^2_{a}\right)^{\frac{1}{2}}\left(\frac{\left\|\fb\right\|_{L^2(\Omega)}}{\sqrt{\lambda_{\mathrm{min}}(\mathbf{G})}}\right),
\end{eqnarray}
where in the last inequality we used the fact that each element of $\mathcal{T}_H$ is contained in the support of $N_E d$ basis functions.

Thus, from \eqref{S-hat-TriangIneq-bound}, \eqref{S-hat-error-bound-rhs1}, and \eqref{S-hat-error-bound-rhs2}, the estimate in (\eqref{S-hat-Error-Estimate}) follows.
\end{proof}

\begin{remark}
The third term of the r.h.s.\ of \eqref{S-hat-Error-Estimate} suggests that the error of the approximate solution due to the basis localization procedure will be small provided that $\widetilde{\sigma}$ is small and $\lambda_{\mathrm{min}}(\mathbf{G})$ is large enough. Moreover, this error due to localization will exhibit a superexponential decay if $\frac{\widetilde{\sigma}}{\sqrt{\lambda_{\mathrm{min}}(\mathbf{G})}}$ does so.
\end{remark}

\subsection{Condition number of SLOD stiffness matrix}
~~The stiffness matrix associated with the SLOD basis is defined as $\hat{\mathbb{A}}_H \in \mathbb{R}^{N_H d\times N_H d}$ such that $(\hat{\mathbb{A}}_H)_{ij}=a(\hat{\varphi}_i,\hat{\varphi}_j)$, where $\hat{\varphi}_i,\hat{\varphi}_j\in \hat{\mathcal{B}}$. Consider $\Pi_{H}\hat{\varphi}_{i}=\sum_{k=1}^d\sum_{T\in \mathcal{T}_{H}}p_{T,k}^{(i)}\eb_k\chi_{T}$ and define $\mathbf{P}\in \mathbb{R}^{N_{H}d \times N_{H}d}$ such that $\mathbf{P}_{ij}=p^{(j)}_{T_q,k}$ with $i=(k-1)N_H+q$. We can write $\mathbf{P}=\widetilde{\mathbf{P}}\mathbf{N}$, where $\widetilde{\mathbf{P}}\in \mathbb{R}^{N_{H}d\times N_{H}d }$ is such that $\widetilde{\mathbf{P}}_{ij}=\|\hat{\varphi}_{j}\|_a\mathbf{P}_{ij}$ and $\mathbf{N}\in\mathbb{R}^{N_{H}d\times N_{H}d }$ is the diagonal matrix such that $\mathbf{N}_{ii}=1/\|\hat{\varphi}_{i}\|_a$. With the same procedure as in \cite[Appendix B]{GarayMohrPeterseimetal}, it can be shown that 
\begin{equation}\label{PTP-lambda-min-bound}
\lambda_{\mathrm{min}}^{-1}(\mathbf{P}^T\mathbf{P})\leq \frac{C}{\lambda_{\mathrm{min}}(\widetilde{\mathbf{P}}^T \widetilde{\mathbf{P}})}H^{d-2},
\end{equation}
where $C$ and $\lambda_{\mathrm{min}}(\widetilde{\mathbf{P}}^T \widetilde{\mathbf{P}})$ are mesh independent quantities, $C$ depends on $\beta$ and $\alpha$, and $\lambda_{\mathrm{min}}(\widetilde{\mathbf{P}}^T \widetilde{\mathbf{P}})$ is independent of $\beta$ and $\alpha$ but depends on the degree of linear independency of the basis functions. With $\hat{\mathbf{P}}=H^{\frac{d}{2}-1}\mathbf{P}$, it follows from \eqref{PTP-lambda-min-bound} that $\lambda_{\mathrm{min}}(\hat{\mathbf{P}}^T \hat{\mathbf{P}})$ has a mesh independent lower bound, i.e., $\lambda^{-1}_{\mathrm{min}}(\hat{\mathbf{P}}^T \hat{\mathbf{P}})=\mathcal{O}(1)$. The following theorem provides an estimate on the condition number of the stiffness matrix $\hat{\mathbb{A}}_H$.

\begin{theorem}\label{HSLOD-Stiffness-Blocks-condnum-estimate}
The condition number of the stiffness matrix $\hat{\mathbb{A}}_{H}$ is $\mathcal{O}(H^{-2})$ and can be bounded as
\begin{equation}\label{SLOD-condnum-bound}
\kappa\big(\hat{\mathbb{A}}_{H} \big)\leq 
\frac{\mathrm{diam}^2(\Omega) \mathfrak{n}_{o}}{\pi^2 \alpha  \lambda_{\mathrm{min}}(\hat{\mathbf{P}}^T\hat{\mathbf{P}})}H^{-2},
\end{equation}
where $\mathfrak{n}_{o}$ is the maximum possible number of functions $\hat{\varphi}_{i}\in \hat{\mathcal{B}}$  whose supports overlap over a region of $\Omega$, $\alpha>0$ is given in \eqref{A-alpha-beta-bounds}, and $\hat{\mathbf{P}}\in \mathbb{R}^{N_H d\times N_H d}$ is defined as above with $\lambda^{-1}_{\mathrm{min}}(\hat{\mathbf{P}}^T \hat{\mathbf{P}})=\mathcal{O}(1)$. 
\end{theorem}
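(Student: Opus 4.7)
The plan is to bound $\lambda_{\max}(\hat{\mathbb{A}}_H)$ from above and $\lambda_{\min}(\hat{\mathbb{A}}_H)$ from below, and then to divide. For any coefficient vector $\mathbf{c}=(c_i)\in\mathbb{R}^{N_H d}$ I set $v_{\mathbf c}:=\sum_i c_i\hat\varphi_i\in\hat{\mathcal V}_H$, so that $\mathbf{c}^T\hat{\mathbb{A}}_H\mathbf{c}=a(v_{\mathbf c},v_{\mathbf c})=\|v_{\mathbf c}\|_a^2$. Since every SLOD basis function is normalized so that $\|\hat\varphi_i\|_a=1$, the Rayleigh quotient reduces to comparing $\|v_{\mathbf c}\|_a^2$ to $\|\mathbf c\|_2^2$.

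For the upper bound on $\lambda_{\max}$, I would use the finite-overlap property of the SLOD supports. By expanding $\|v_{\mathbf c}\|_a^2$, applying the Cauchy--Schwarz inequality to each nonzero pair $a(\hat\varphi_i,\hat\varphi_j)$, and using that at most $\mathfrak{n}_o$ basis functions are simultaneously nonzero at any point of $\Omega$, one obtains $\|v_{\mathbf c}\|_a^2\le \mathfrak{n}_o\sum_i c_i^2\|\hat\varphi_i\|_a^2=\mathfrak{n}_o\|\mathbf c\|_2^2$, hence $\lambda_{\max}(\hat{\mathbb{A}}_H)\le\mathfrak n_o$.

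For the lower bound on $\lambda_{\min}$, I chain three inequalities. First, by Korn/Poincar\'e--Friedrichs on $\Omega$ together with \eqref{Equivalence-Energy-H1semi},
\[
\|v_{\mathbf c}\|_a^2\;\ge\;\frac{\pi^2\alpha}{\mathrm{diam}^2(\Omega)}\,\|v_{\mathbf c}\|_{L^2(\Omega)}^2.
\]
Second, since $\Pi_H$ is the $L^2$-orthogonal projection onto $(\mathbb{Q}^0(\mathcal T_H))^d$, one has $\|v_{\mathbf c}\|_{L^2(\Omega)}^2\ge\|\Pi_H v_{\mathbf c}\|_{L^2(\Omega)}^2$, and by definition of $\mathbf P$ the coefficients of $\Pi_H v_{\mathbf c}$ in the $(\eb_k\chi_T)$ basis are precisely $\mathbf P\mathbf c$. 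A direct computation (since the piecewise-constant basis is $L^2$-orthogonal with $\|\eb_k\chi_T\|_{L^2}^2=H^d$) gives $\|\Pi_H v_{\mathbf c}\|_{L^2(\Omega)}^2=H^d\|\mathbf P\mathbf c\|_2^2\ge H^d\lambda_{\min}(\mathbf P^T\mathbf P)\|\mathbf c\|_2^2$. Third, using the rescaling $\hat{\mathbf P}=H^{d/2-1}\mathbf P$, the factor $H^d\lambda_{\min}(\mathbf P^T\mathbf P)$ simplifies to $H^2\lambda_{\min}(\hat{\mathbf P}^T\hat{\mathbf P})$, which is mesh-independent by \eqref{PTP-lambda-min-bound}. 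Combining the three steps yields
\[
\lambda_{\min}(\hat{\mathbb{A}}_H)\;\ge\;\frac{\pi^2\alpha\,\lambda_{\min}(\hat{\mathbf P}^T\hat{\mathbf P})}{\mathrm{diam}^2(\Omega)}\,H^2,
\]
and dividing the upper bound on $\lambda_{\max}$ by this lower bound produces exactly \eqref{SLOD-condnum-bound}.

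The routine parts are the finite-overlap argument and the Poincar\'e step. The point that needs the most care is the bookkeeping for the lower bound: one must ensure that the coefficients of $\Pi_H v_{\mathbf c}$ in the $L^2$-orthogonal basis $\{\eb_k\chi_T\}$ really are $(\mathbf P\mathbf c)$ with the indexing convention $i=(k-1)N_H+q$ fixed in the theorem, and that the $H^d$-factor from $\|\eb_k\chi_T\|_{L^2}^2$ combines correctly with the $H^{d-2}$ rescaling defining $\hat{\mathbf P}$ to leave only $H^2$. This is the main obstacle; once the scaling is checked, the pieces fit together into the stated $\mathcal O(H^{-2})$ condition-number estimate.
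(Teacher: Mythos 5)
Your proposal is correct and follows essentially the same route as the paper: the lower bound on $\lambda_{\min}(\hat{\mathbb{A}}_H)$ is obtained by the identical chain $H^d\lambda_{\min}(\mathbf{P}^T\mathbf{P})\|\mathbf{c}\|_2^2\le\|\Pi_H v_{\mathbf c}\|_{L^2}^2\le\|v_{\mathbf c}\|_{L^2}^2\le\frac{\mathrm{diam}^2(\Omega)}{\pi^2\alpha}\|v_{\mathbf c}\|_a^2$ together with the rescaling $\hat{\mathbf P}=H^{d/2-1}\mathbf P$. The only cosmetic difference is in the upper bound, where the paper invokes the Gershgorin circle theorem with $|a(\hat\varphi_i,\hat\varphi_j)|\le1$ while you expand the quadratic form directly via Cauchy--Schwarz and finite overlap; both yield $\lambda_{\max}(\hat{\mathbb{A}}_H)\le\mathfrak{n}_o$.
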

\begin{proof}
Using \Cref{Riesz-basis} and for an arbitrary $\mathbf{c}=\left(c_i\right)_{i=1}^{N_{H}}\in \mathbb{R}^{N_{H}}$ we have
\begin{equation}\label{L2norm-lower-bound-lincomb-L2projections}
\Big\|\sum_{i=1}^{N_{H}}c_i\Pi_{H}\hat{\varphi}_{i}\Big\|_{L^2(\Omega)}^2=H^d\mathbf{c}^T\mathbf{P}^T\mathbf{P} \mathbf{c}\geq H^d \lambda_{\mathrm{min}}(\mathbf{P}^T\mathbf{P})\|\mathbf{c}\|_2^2.
\end{equation}
Since the $L^2$-projection operator $\Pi_{H}$ is an orthogonal projection operator, it holds that
\begin{equation}\label{L2-proj-reduced-norm-condition}
\|\Pi_{H}\vb\|_{L^2(\Omega)}\leq\|\vb\|_{L^2(\Omega)}\quad\text{for all }\vb\in L^2(\Omega;\Rd).
\end{equation}
Then, from \eqref{L2norm-lower-bound-lincomb-L2projections}, \eqref{L2-proj-reduced-norm-condition}, the Poincar\'e-Friedrichs inequality, and \eqref{A-alpha-beta-bounds}, we obtain
\begin{equation}\label{condnum-derivation}
 H^d\lambda_{\mathrm{min}}(\mathbf{P}^T\mathbf{P})\sum_{i=1}^{N_{H}}c_{i}^{2}\leq  \Big\|\sum_{i=1}^{N_{H}}c_i\Pi_{H}\hat{\varphi}_{i}\Big\|_{L^2(\Omega)}^2
\leq \Big\|\sum_{i=1}^{N_{H}}c_i\hat{\varphi}_{i}\Big\|_{L^2(\Omega)}^2\leq \frac{\mathrm{diam}^2(\Omega)}{\pi^2 \alpha} \Big\|\sum_{i=1}^{N_{H}}c_i\hat{\varphi}_{i}\Big\|_{a}^2.
\end{equation}
From \Cref{Riesz-basis}, the definition of $\hat{\mathbf{P}}$, and \eqref{condnum-derivation} it follows that
\begin{equation}
\lambda_{\mathrm{min}}(\hat{\mathbb{A}}_H)\geq \frac{\pi^2\alpha}{\mathrm{diam}^2(\Omega)}H^{2}\lambda_{\mathrm{min}}(\hat{\mathbf{P}}^T\hat{\mathbf{P}}).
\end{equation}

Since $|a(\hat{\varphi}_{i},\hat{\varphi}_{j})|\leq 1$ for all $\hat{\varphi}_{i},\hat{\varphi}_{j}\in \hat{\mathcal{B}}$, using the Gershgorin Circle Theorem we obtain $\lambda_{\mathrm{max}}(\hat{\mathbb{A}}_H)\leq \mathfrak{n}_{o}$. Then, since $\kappa(\hat{\mathbb{A}}_H)=\lambda_{\max}(\hat{\mathbb{A}}_H)/\lambda_{\min}(\hat{\mathbb{A}}_H)$, the estimate \eqref{SLOD-condnum-bound} follows.
\end{proof}

\begin{remark}
Note from \eqref{PTP-lambda-min-bound} that $\lambda^{-1}_{\mathrm{min}}(\hat{\mathbf{P}}^T\hat{\mathbf{P}})\leq C \lambda^{-1}_{\mathrm{min}}({\widetilde{\mathbf{P}}^T \widetilde{\mathbf{P}}})$. From the definition of $\widetilde{\mathbf{P}}$ and the definition of SLOD basis functions given \eqref{SLOD basis def}, it follows that $\widetilde{\mathbf{P}}=\mathbf{I}+\mathbf{C}$, where $\mathbf{I}$ is the identity matrix and $\mathbf{C}\in\mathbb{R}^{n_e d \times n_e d}$ is the matrix whose $i$-th row contains the vector $\bar{\mathbf{c}}_s$ (cf.\ \eqref{stable_c_computation}) associated with the SLOD basis function $\hat{\varphi}_i$ and a zero diagonal entry. If $\mathbf{C}=0$ we would have $\lambda_{\mathrm{min}}({\widetilde{\mathbf{P}}^T \widetilde{\mathbf{P}}})=1$. Consequently, whenever the entries of $\mathbf{C}$ are small enough (which is the goal of the basis stability condition \eqref{basis-stability-condition}), $\lambda_{\mathrm{min}}({\widetilde{\mathbf{P}}^T \widetilde{\mathbf{P}}})$ should be far from zero and $\lambda^{-1}_{\mathrm{min}}({\widetilde{\mathbf{P}}^T \widetilde{\mathbf{P}}})$ small.
\end{remark}

\section{Numerical Experiments}\label{s:numexp}

All numerical simulations have been carried out using the \texttt{deal.II} library \cite{deal}, 
and the source code is publicly available at 
\url{https://github.com/camillabelponer/dealii-SLOD}.

The code developed for this work follows an object-oriented design using class inheritance. 
A virtual base class \texttt{LOD} encapsulates the core, problem-independent aspects of 
the Localized Orthogonal Decomposition (LOD) method, including problem setup and solution 
assembly. Depending on user-specified parameters from an input file, the code can construct 
either LOD basis functions 
\(\bar{\psi}^{\text{LOD}}_{T,k}\) 
or SLOD basis functions 
\(\hat{\varphi}^{\text{SLOD}}_{T,k}\), 
which are then used to assemble the system matrix.

To handle specific problem settings, a (publicly) derived class, \texttt{ElasticityProblem}, 
specializes \texttt{LOD} for linear elasticity. It contains problem-specific elements such 
as the elasticity parameters and the assembly routine for the fine-scale stiffness matrix  associated with
\(\mathcal{A}_{h,\omega}\). Another derived class, \texttt{DiffusionProblem}, is also
implemented and allowed us to compare directly this implementation with literature (\cite{GarayMohrPeterseimetal}), however results for Diffusion problems are nor discussed in this work.

Unlike previous prototype SLOD implementations (see, e.g., 
\cite{haucklozinsky, MalP20} %GarayMohrPeterseimetal
), the assembly of the SLOD system matrix 
in this code does not rely on the global fine-scale FEM system matrix. However, the global fine-scale 
matrix can be optionally assembled if a reference solution is needed for error 
computation. This is accomplished by storing an additional vector for each basis function 
on each patch, which decouples the SLOD system matrix assembly from the global standard FEM matrix.

Algorithm~\ref{alg:cap} summarizes the SLOD basis construction step, producing two basis functions 
for each patch, as detailed in Section~\ref{section:SLOD}.

\begin{algorithm}
  \caption{Construction of SLOD basis functions}\label{alg:cap}
  \begin{algorithmic}
    \For{each patch $\omega$ centerd in $T$}
      \State \textbf{Assemble} \texttt{patch\_stiffness\_matrix} associated with $\mathcal{A}_{h,\omega}$ (cf. \eqref{LOD-Saddle-point-formulation})
      \State \textbf{Assemble} \texttt{projection\_matrix} associated with $\mathcal{P}^T$ (cf. \eqref{LOD-Saddle-point-formulation})
      \State \textbf{Assemble} matrices $\mathbf{B}, \mathbf{D}$ \quad({cf.}\ \eqref{linear-system-solved-by-least-squares})
      \State \textbf{Compute} singular value decomposition of $(\mathbf{BD})^T \mathbf{BD}$ 
             \quad({cf.}\ \eqref{svd-computation}) \Comment{Uses LAPACK}
      \State \textbf{Compute} correction $\bar{\mathbf{c}}_{s}$ \quad({cf.}\ \eqref{stable_c_computation})
      \State \textbf{Compute} basis functions 
             $\hat{\varphi}^{\text{SLOD}}_{T,i}, i\in\{1,\ldots,d\}$
    \EndFor
  \end{algorithmic}
\end{algorithm}

\begin{comment}
\subsection{Illustrative example of basis localization}

\begin{figure}[htbp]
  \centering
  \subcaptionbox{Localization of basis function inside the domain\label{fig:subfig1}}
  {\includegraphics[width=.3\textwidth, align=c]{}}
  \qquad
  \subcaptionbox{LOD and SLOD basis function\label{fig:subfig2}}
  {\includegraphics[width=.4\textwidth]{}}
  \caption{Comparison of LOD and SLOD basis functions on a patch sufficiently far from the 
    boundary of the domain, with oversampling parameter set to 2.}
\end{figure}

Figure~\ref{fig:subfig1} displays a representative patch within the computational domain. 
Figure~\ref{fig:subfig2} compares the support and magnitude of an LOD basis function 
(\(\bar{\psi}^{\text{LOD}}_{T,k}\)) and the corresponding SLOD function 
(\(\hat{\varphi}^{\text{SLOD}}_{T,k}\)). As expected, the SLOD basis demonstrates 
more rapid decay outside the patch interior, underscoring its improved localization properties.
\end{comment}

In what follows, we present the results of two numerical experiments with different data. The first considers constant Lam\'e parameters and a constant right-hand side $\fb$. The second assumes strongly heterogeneous Lam\'e parameters and two types of $\fb$. In these experiments, errors are measured in the $H^1$-seminorm and $L^2$-norm. Note that the decay rate of the error in the $H^1$-seminorm is the same as in the energy norm due to the equivalence between the energy norm and the $H^1$-seminorm indicated in \eqref{Equivalence-Energy-H1semi}. From standard duality arguments, the error in the $L^2$-norm is expected to have an extra order of decay in $H$ with respect to the energy norm, as is known for the classical LOD method in the scalar case; see e.g. \cite[Ch.~5]{MalP20}.  

\subsection{Simple convergence test}
We now present a set of numerical tests to compare the SLOD approach 
against the original LOD method and the standard FEM. 
In this experiment, the problem domain \(\Omega\) is the unit square $(0,1)^2$, with homogeneous 
Dirichlet boundary data on \(\partial \Omega\). The right-hand side is 
\(\fb = (1, 1)^{\top}\), and the Lam\'{e} parameters are set to \(\lambda = \mu = 1\). 
Errors are computed against a fine-scale FEM reference solution \(\ub_h\), obtained 
on a fine mesh with \(h = 2^{-8}\). 

Figure~\ref{fig:test_convergence} shows the error behavior of SLOD compared to the classical 
FEM. The oversampling parameter \(m\) controls how much additional local information is 
incorporated in the basis functions. With \(m \geq 2\) a very coarse 
mesh is sufficient to reach the same accuracy that standard FEM attains at its finest level 
of refinement. This demonstrates that the super-localized basis effectively captures 
fine-scale features, leading to significant computational savings.

%We can see how un-stabilized LOD is able to recover the same 
%convergence rate as FEM, at least for optimized values of oversampling, 
%i.e. $m^* = \log_2(1/H)$, resulting in $m^* = 2, 3, 4, 5, 6, 7$ as according to literature (e.g. \cite{EngwerHenningMalqvistetal.2019}).
Figure~\ref{fig:test_convergence_m} illustrates the superexponential decay of the SLOD error 
for a range of coarse-mesh sizes~\(H\). This finding aligns with earlier results for scalar 
problems that employ a similar basis stabilization technique \cite{GarayMohrPeterseimetal} on scalar-valued problems, 
thus validating our current implementation.

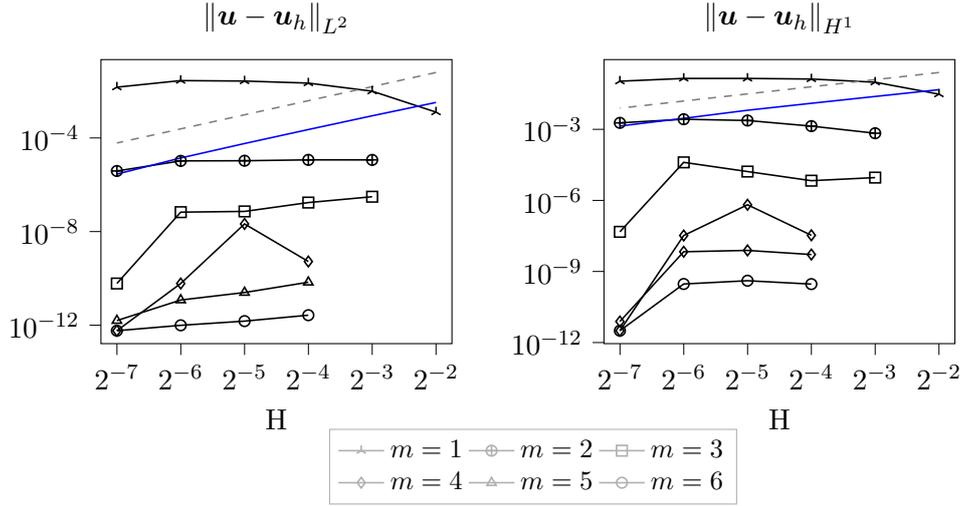
\begin{figure}[ht]
  \centering
  % This file was created with tikzplotlib v0.10.1.
\begin{tikzpicture}

\begin{axis}[
      name=ax1,
      ErrorPlot,
      xmin=0.00656950324416964, xmax=0.29730177875068,
      xmode=log,
      title={$\left\lVert \boldsymbol{u} - \boldsymbol{u}_h \right\lVert _{L^2}$},
      ymin=1.63357852714323e-13, ymax=0.222555875924612,
      ymode=log
]
% \addplot [p2, dashed]
% table {%
% 0.25 0.005191
% 0.125 0.01888
% 0.0625 0.02696
% 0.03125 0.02945
% 0.015625 0.0301
% 0.0078125 0.03028
% };
% \addplot [p3, dashed]
% table {%
% 0.125 0.002767
% 0.0625 0.01207
% 0.03125 0.02281
% 0.015625 0.02815
% 0.0078125 0.0301
% };
% \addplot [p4, dashed]
% table {%
% 0.125 0.0006385
% 0.0625 0.001357
% 0.03125 0.005453
% 0.015625 0.01544
% 0.0078125 0.0285
% };
% \addplot [p5, dashed]
% table {%
% 0.0625 0.0003363
% 0.03125 0.0005145
% 0.015625 0.002492
% 0.0078125 0.01968
% };
% \addplot [p6, dashed]
% table {%
% 0.0625 0.0003012
% 0.03125 6.37e-05
% 0.015625 0.0002297
% 0.0078125 0.005141
% };
% \addplot [p7, dashed]
% table {%
% 0.0625 0.0002503
% 0.03125 1.554e-05
% 0.015625 2.144e-05
% 0.0078125 0.0006752
% };

% \addplot[p8]
% table {%
% 0.125 0.002767
% 0.0625 0.001357
% 0.03125 0.0005145
% 0.015625 0.0002297
% 0.0078125 0.005141
% };
% \addlegendentry{LOD, $m^*$}

\addplot [p2]
table {%
0.25 0.001271
0.125 0.009942
0.0625 0.02204
0.03125 0.02696
0.015625 0.02819
0.0078125 0.01475
};
\addlegendentry{%SLOD, 
$m = 1$}
\addplot [p3]
table {%
0.125 1.154e-05
0.0625 1.159e-05
0.03125 1.064e-05
0.015625 1.047e-05
0.0078125 3.88e-06
};
\addlegendentry{%SLOD, 
$m = 2$}
\addplot [p4]
table {%
0.125 3.081e-07
0.0625 1.737e-07
0.03125 7.27e-08
0.015625 6.759e-08
0.0078125 6.051e-11
};
\addlegendentry{%SLOD, 
$m = 3$}
\addplot [p5]
table {%
0.0625 5.262e-10
0.03125 2.123e-08
0.015625 6.048e-11
0.0078125 5.817e-13
};
\addlegendentry{%SLOD, 
$m = 4$}
\addplot [p6]
table {%
0.0625 6.862e-11
0.03125 2.495e-11
0.015625 1.192e-11
0.0078125 1.577e-12
};
\addlegendentry{%SLOD, 
$m = 5$}
\addplot [p7]
table {%
0.0625 2.679e-12
0.03125 1.495e-12
0.015625 9.891e-13
0.0078125 5.817e-13
};
\addlegendentry{%SLOD, 
$m = 6$}
\addplot [fem]
table {%
0.25 0.003287
0.125 0.0008842
0.0625 0.0002288
0.03125 5.783e-05
0.015625 1.397e-05
0.0078125 2.854e-06
};
\addplot [order]
table {%
0.25 0.0625
0.125 0.015625
0.0625 0.00390625
0.03125 0.0009765625
0.015625 0.000244140625
0.0078125 6.103515625e-05
};
\end{axis}
\begin{axis}[
  at={(ax1.south east)},
  xshift=2cm,
  ErrorPlot,
  xmin=0.00656950324416964, xmax=0.29730177875068,
xmode=log,
title={$\left\lVert \boldsymbol{u} - \boldsymbol{u}_h \right\lVert _{H^1}$},
ymin=8.99605975117506e-13, ymax=0.876772744753029,
ymode=log
]
% \addplot [semithick, khaki, dashed, mark=*, mark size=3, mark options={solid}]
% table {%
% 0.25 0.06922
% 0.125 0.1248
% 0.0625 0.1396
% 0.03125 0.1426
% 0.015625 0.143
% 0.0078125 0.1432
% };
% \addplot [semithick, salmon, dashed, mark=*, mark size=3, mark options={solid}]
% table {%
% 0.125 0.04318
% 0.0625 0.09186
% 0.03125 0.1261
% 0.015625 0.1387
% 0.0078125 0.1427
% };
% \addplot [semithick, teal, dashed, mark=*, mark size=3, mark options={solid}]
% table {%
% 0.125 0.0132
% 0.0625 0.03813
% 0.03125 0.07646
% 0.015625 0.1101
% 0.0078125 0.1377
% };
% \addplot [semithick, pink, dashed, mark=*, mark size=3, mark options={solid}]
% table {%
% 0.0625 0.008648
% 0.03125 0.01803
% 0.015625 0.04571
% 0.0078125 0.1116
% };
% \addplot [semithick, violet, dashed, mark=*, mark size=3, mark options={solid}]
% table {%
% 0.0625 0.005745
% 0.03125 0.007482
% 0.015625 0.01415
% 0.0078125 0.05494
% };
% \addplot [semithick, blue, dashed, mark=*, mark size=3, mark options={solid}]
% table {%
% 0.0625 0.003499
% 0.03125 0.001497
% 0.015625 0.004113
% 0.0078125 0.01965
% };
\addplot [p2]
table {%
0.25 0.03055
0.125 0.09721
0.0625 0.1326
0.03125 0.1406
0.015625 0.1405
0.0078125 0.1072
};
\addplot [p3]
table {%
0.125 0.0006832
0.0625 0.001367
0.03125 0.002356
0.015625 0.002665
0.0078125 0.001875
};
\addplot [p4]
table {%
0.125 9.177e-06
0.0625 6.787e-06
0.03125 1.65e-05
0.015625 4.036e-05
0.0078125 4.712e-08
};
\addplot [p5]
table {%
0.0625 3.366e-08
0.03125 6.538e-07
0.015625 3.293e-08
0.0078125 3.155e-12
};
\addplot [p5]
table {%
0.0625 5.182e-09
0.03125 7.743e-09
0.015625 6.797e-09
0.0078125 7.722e-12
};
\addplot [p7]
table {%
0.0625 2.93e-10
0.03125 4.068e-10
0.015625 2.96e-10
0.0078125 3.155e-12
};
\addplot [fem]
table {%
0.25 0.04706
0.125 0.02439
0.0625 0.01255
0.03125 0.006381
0.015625 0.002908
0.0078125 0.001377
};
\addplot [order]
table {%
0.25 0.25
0.125 0.125
0.0625 0.0625
0.03125 0.03125
0.015625 0.015625
0.0078125 0.0078125
};

\end{axis}

\end{tikzpicture}
  \caption{Error of SLOD compared to the classical FEM (blue line). 
           The dashed gray line indicates the theoretical FEM convergence rate, 
           $\mathcal{O}(H^2)$ (left) and $\mathcal{O}(H)$ (right).}
  \label{fig:test_convergence}
\end{figure}

\begin{figure}[ht]
  \centering
  \begin{tikzpicture}
\begin{axis}[
  ErrorPlot,
legend cell align={left},
legend style={
  fill opacity=0.8,
  draw opacity=1,
  text opacity=1,
  at={(1.05,0.05)},
  anchor=south west,
  draw=darkgray176,
  legend columns=1
},
xlabel={m},
xmin=0.85, xmax=6.15,
xtick style={color=black},
xtick={1, 2, 3, 4, 5, 6},
title={$\left\lVert \boldsymbol{u} - \boldsymbol{u}_h \right\lVert _{L^2}$},
ymin=1e-13, ymax=0.1,
ymode=log,
]
% \addplot [p2, dashed]
% table {%
% 1 0.005191
% };
% \addlegendentry{$H= 2^{-2}$}
\addplot [p3, dashed]
table {%
1 0.01888
2 0.002767
3 0.0006385
};
\addlegendentry{$H= 2^{-3}$}
\addplot [p4, dashed]
table {%
1 0.02696
2 0.01207
3 0.001357
4 0.0003363
5 0.0003012
6 0.0002503
};
\addlegendentry{$H= 2^{-4}$}
\addplot [p5, dashed]
table {%
1 0.02945
2 0.02281
3 0.005453
4 0.0005145
5 6.37e-05
6 1.554e-05
};
\addlegendentry{$H= 2^{-5}$}
\addplot [p6, dashed]
table {%
1 0.0301
2 0.02815
3 0.01544
4 0.002492
5 0.0002297
6 2.144e-05
};
\addlegendentry{$H= 2^{-6}$}
\addplot [p7, dashed]
table {%
1 0.03028
2 0.0301
3 0.0285
4 0.01968
5 0.005141
6 0.0006752
};
\addlegendentry{$H= 2^{-7}$}
% \addplot [p2]
% table {%
% 1 0.001271
% };
% \addplot [p2]
% table {%
% 0.75 0.003287
% 6.25 0.003287
% };
\addplot [p3, forget plot]
table {%
1 0.009942
2 1.154e-05
3 3.081e-07
};
% \addplot [semithick, salmon, forget plot]
% table {%
% 0.75 0.000884199999999996
% 6.25 0.000884199999999996
% };
\addplot [p4, forget plot]
table {%
1 0.02204
2 1.159e-05
3 1.737e-07
4 5.262e-10
5 6.862e-11
6 2.679e-12
};
% \addplot [semithick, teal, forget plot]
% table {%
% 0.75 0.000228799999999999
% 6.25 0.000228799999999999
% };
\addplot [p5, forget plot]
table {%
1 0.02696
2 1.064e-05
3 7.27e-08
4 2.123e-08
5 2.495e-11
6 1.495e-12
};
% \addplot [semithick, pink, forget plot]
% table {%
% 0.75 5.78299999999998e-05
% 6.25 5.78299999999998e-05
% };
\addplot [p6, forget plot]
table {%
1 0.02819
2 1.047e-05
3 6.759e-08
4 6.048e-11
5 1.192e-11
6 9.891e-13
};
% \addplot [semithick, violet, forget plot]
% table {%
% 0.75 1.397e-05
% 6.25 1.397e-05
% };
\addplot [p7, forget plot]
table {%
1 0.01475
2 3.88e-06
3 6.051e-11
4 5.817e-13
5 1.577e-12
6 5.817e-13
};
% \addplot [semithick, blue, forget plot]
% table {%
% 0.75 2.854e-06
% 6.25 2.854e-06
% };
% \addplot [order]
% table {%
% 1 3.67879441171442e-05
% 2 1.83156388887342e-06
% 3 1.2340980408668e-08
% 4 1.12535174719259e-11
% 5 1.3887943864964e-15
% 6 2.31952283024357e-20
% };
\addplot[order, samples=1000, domain=1:6] {1e-4*exp(-x*x)}; % 0.06 e^(-1.4 m)")
\addlegendentry{$ \propto e^{-m^2}$}
% \addplot [semithick, gray]
% table {%
% 1 0.0147958178364964
% 2 0.00364860375751308
% 3 0.000899734609228663
% 4 0.000221871822988976
% 5 5.4712917933271e-05
% 6 1.34920394507309e-05
% };
\addplot[semithick, gray, samples=1000, domain=1.5:6] {0.015*exp(-1.4 * x)};
\addlegendentry{$ \propto e^{-m}$}
\end{axis}

\end{tikzpicture}
  \caption{Exponential decay of SLOD (solid lines). Dashed lines show the LOD error 
           for the same values of $H$ given in the legend.}
  \label{fig:test_convergence_m}
\end{figure}
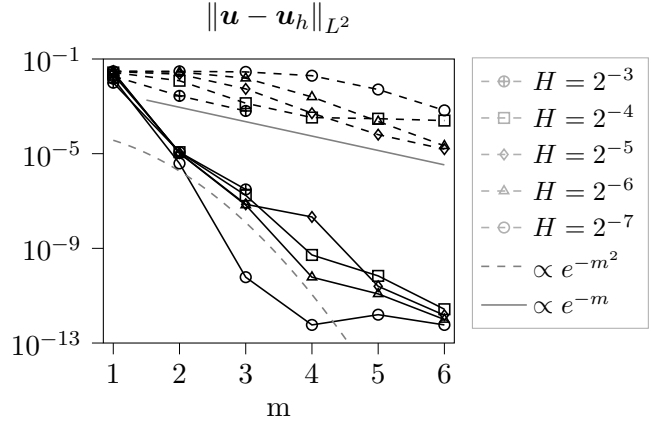

The purpose of this ideal test is twofold: 
(1)~to confirm our implementation, and 
(2)~to investigate the role of the method parameters \((m, H)\) in identifying 
an effective setup for practical use. Based on these experiments, we hereafter restrict 
attention to $m>1$ but $m<5$. Figure~\ref{fig:test_convergence} is consistent with the
existing literature in showing that a single oversampling layer ($m=1$) is insufficient
to fully exploit the efficiency of SLOD, while $m=5$ or $m=6$ offers no significant
accuracy gain relative to its higher computational cost. In other words, an oversampling
level between $2$ and $4$ appears to strike a more favorable balance between accuracy and computational effort.

\subsection{Strongly heterogeneous coefficient}

We now turn to a more practical test case. First, we create a grid of characteristic length 
\(\eta = 2^{-6}\), on which the Lam\'{e} parameters \(\lambda\) and \(\mu\) are sampled 
independently from a uniform distribution over the interval \([1,100]\). 
We then investigate how these heterogeneous coefficients impact the performance 
of the SLOD method.

\begin{figure}[ht]
  \centering
  \includegraphics[height=0.25\textwidth,keepaspectratio]{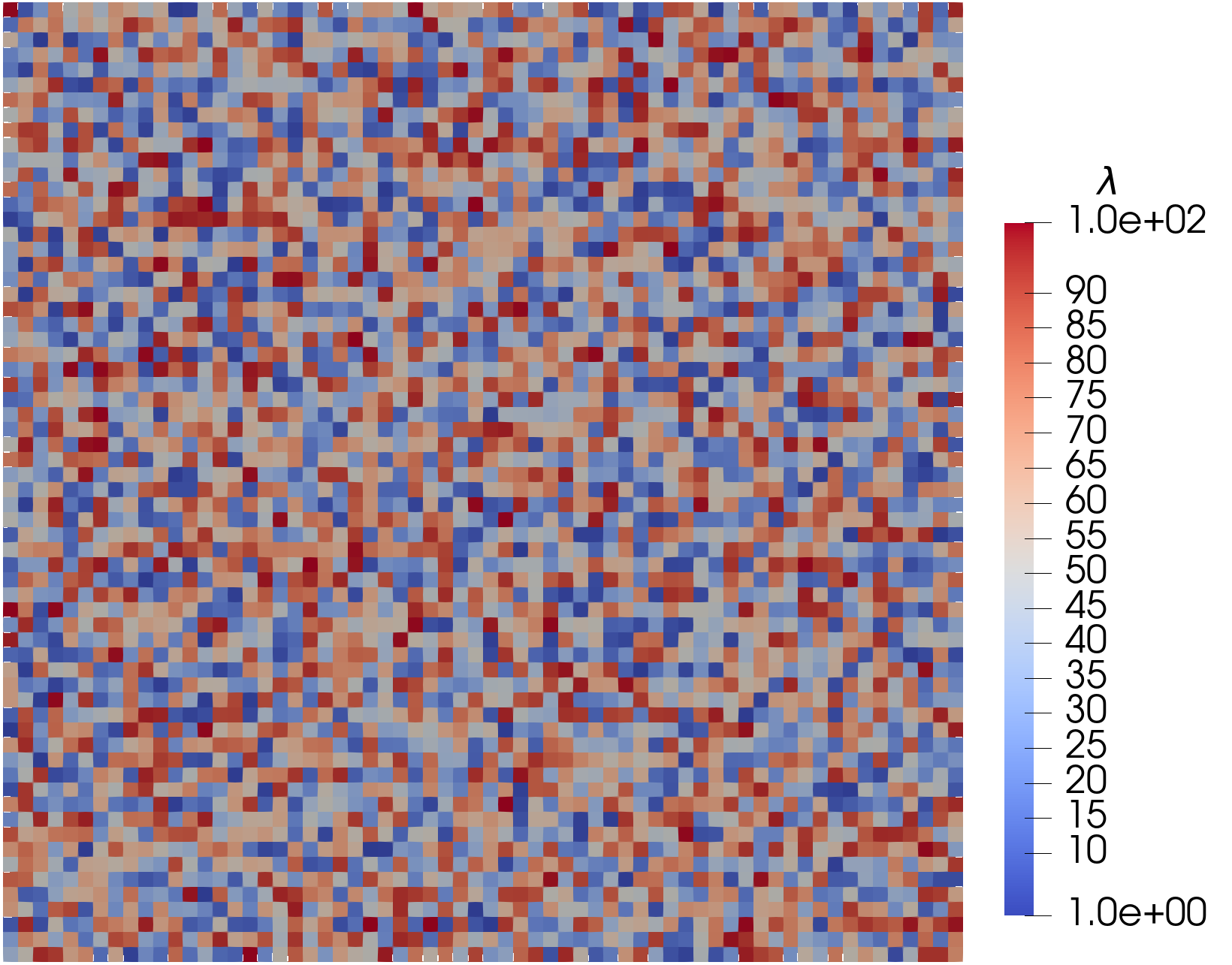}
  \hspace{0.6cm}
  \includegraphics[height=0.25\textwidth,keepaspectratio]{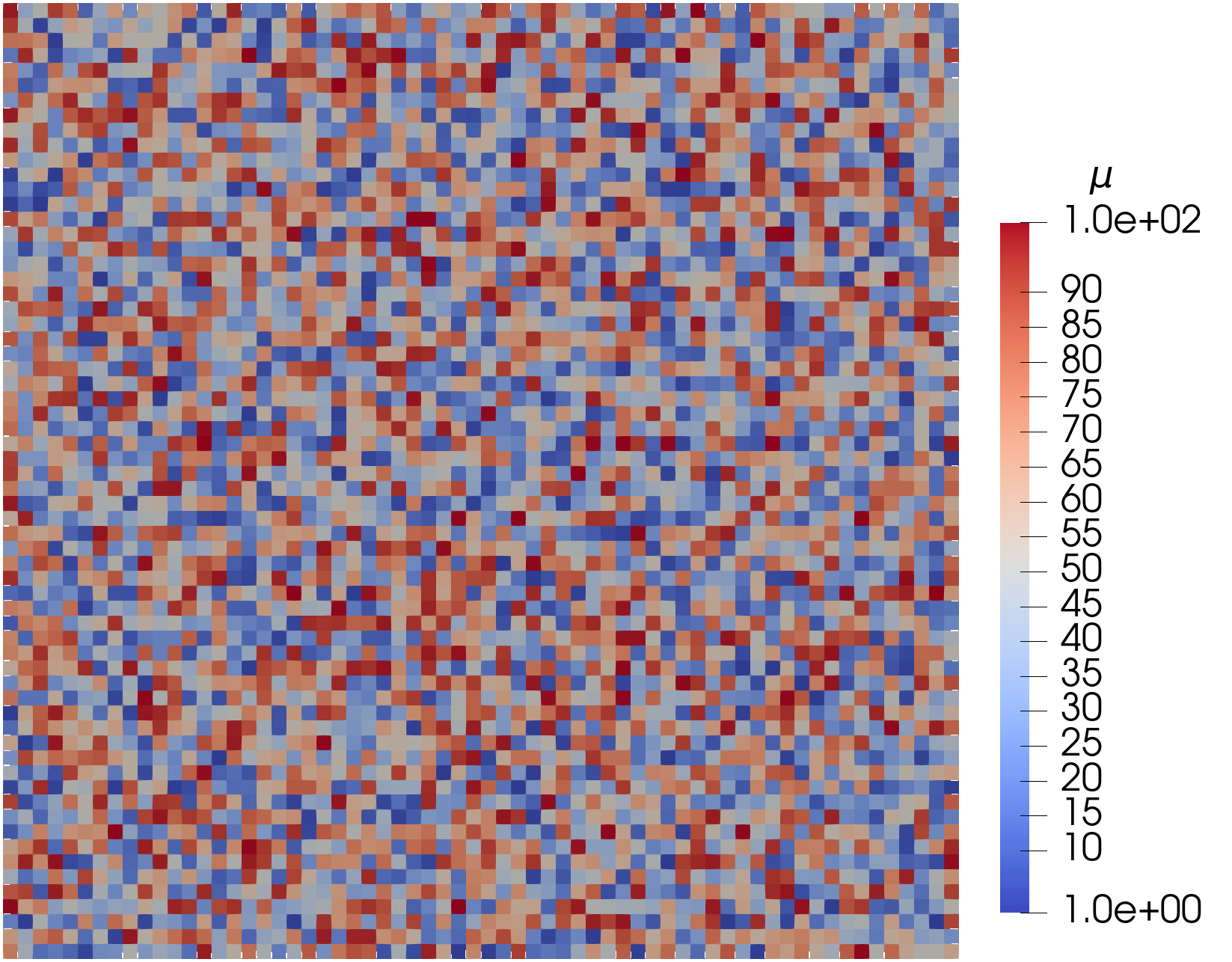}
  \caption{Realizations of \(\lambda\) and \(\mu\) drawn from \(\mathcal{U}[1, 100]\) on 
  a grid of characteristic length \(\eta = 2^{-6}\).}
\end{figure}

A fine-scale FEM reference solution \(\ub_h\) is computed on a mesh with \(h = 2^{-8}\). 
We then compare this reference to approximate solutions obtained with both SLOD and a (coarse) FEM discretizations 
on meshes of increasing refinement \(H = 2^{-3},\dots,2^{-7}\). 
The right-hand side is chosen as the constant function \(\fb = (1,1)\). 
Figure~\ref{fig:test_randomcoeff_pwc} shows that SLOD exhibits a superexponential 
decay in the error, whereas the (coarse) FEM recovers its optimal rate of convergence once the mesh size $H$ approaches characteristic length $\eta$.

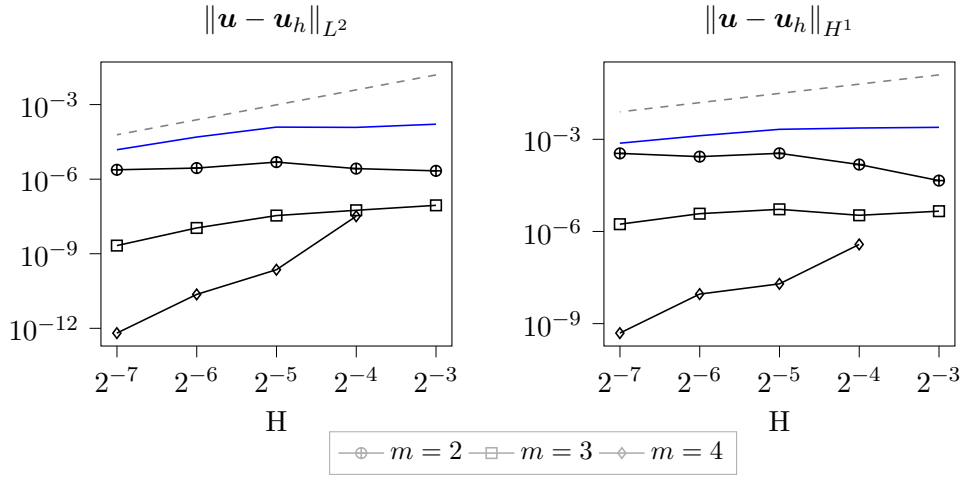
\begin{figure}[!ht]
  \centering
  % This file was created with tikzplotlib v0.10.1.

\begin{tikzpicture}
  \begin{axis}[
    name=ax1,
  ErrorPlot,
xmin=0.00680117627575097, xmax=0.143587294374629,
xmode=log,
title={$\left\lVert \boldsymbol{u} - \boldsymbol{u}_h \right\lVert _{L^2}$},
ymin=1.94060243391216e-13, ymax=0.0516592158435566,
ymode=log,
]
\addplot [p3]
table {%
0.125 2.161e-06
0.0625 2.658e-06
0.03125 4.854e-06
0.015625 2.791e-06
0.0078125 2.385e-06
};
\addlegendentry{$m = 2$};
\addplot [p4]
table {%
0.125 8.852e-08
0.0625 5.602e-08
0.03125 3.431e-08
0.015625 1.089e-08
0.0078125 2.126e-09
};
\addlegendentry{$m = 3$};
\addplot [p5]
table {%
0.0625 3.234e-08
0.03125 2.275e-10
0.015625 2.31e-11
0.0078125 6.416e-13
};
\addlegendentry{$m = 4$};
\addplot [fem]
table {%
0.125 0.0001624
0.0625 0.0001209
0.03125 0.000124
0.015625 4.947e-05
0.0078125 1.524e-05
};
\addplot [order]
table {%
0.125 0.015625
0.0625 0.00390625
0.03125 0.0009765625
0.015625 0.000244140625
0.0078125 6.103515625e-05
};
\end{axis}
\begin{axis}[
  at={(ax1.south east)},
  xshift=2cm,
  ErrorPlot,
  xmin=0.00680117627575097, xmax=0.143587294374629,
xmode=log,
title={$\left\lVert \boldsymbol{u} - \boldsymbol{u}_h \right\lVert _{H^1}$},
ymin=1.86627558852101e-10, ymax=0.32899749842765,
ymode=log
]
\addplot [p3]
table {%
0.125 4.532e-05
0.0625 0.0001508
0.03125 0.0003498
0.015625 0.0002728
0.0078125 0.0003478
};
\addplot [p4]
table {%
0.125 4.573e-06
0.0625 3.359e-06
0.03125 5.26e-06
0.015625 3.791e-06
0.0078125 1.723e-06
};
\addplot [p5]
table {%
0.0625 3.785e-07
0.03125 1.97e-08
0.015625 9.153e-09
0.0078125 4.912e-10
};
\addplot [fem]
table {%
0.125 0.002453
0.0625 0.002334
0.03125 0.002112
0.015625 0.001307
0.0078125 0.0007475
};
\addplot [order]
table {%
0.125 0.125
0.0625 0.0625
0.03125 0.03125
0.015625 0.015625
0.0078125 0.0078125
};
\end{axis}
\end{tikzpicture}
  \caption{Error behavior of SLOD (with oversampling \(m=2,3,4\)) and standard FEM (blue line) 
  in the case of randomly varying Lam\'{e} coefficients and constant $\fb$. 
  The dashed gray lines represent the theoretical FEM convergence orders, 
  \(\mathcal{O}(H^2)\) (left) and \(\mathcal{O}(H)\) (right).}
  \label{fig:test_randomcoeff_pwc}
\end{figure}

A key aspect of the SLOD approximation arises from the upper bound in the error estimate 
\eqref{S-hat-Error-Estimate}, where the term 
%\(\bigl(\fb,\ub_h - \tilde{\ub}\bigr)_{L^2(\Omega)}\)
\(\left\|\fb-\Pi_{H}\fb\right\|_{L^2(\Omega)}\)
can become dominant when \(\fb\) is smoother. To illustrate this effect, we repeat 
the above test using the smooth right-hand side
\begin{align}\label{eq:test_randomcoeff_reg_rhs}
  \fb \;=\;
  \pi^2
  \begin{bmatrix}
    4\,\sin(2\pi y)\,\bigl(-1 + 2\cos(2\pi x)\bigr)\;-\;\cos\bigl(\pi(x+y)\bigr)\\[6pt]
    4\,\sin(2\pi x)\,\bigl(-1 + 2\cos(2\pi y)\bigr)\;-\;\cos\bigl(\pi(x+y)\bigr)
  \end{bmatrix},
\end{align}
which particularly belongs to \(H^1(\Omega;\mathbb{R}^2)\).

Figure~\ref{fig:test_randomcoeff_reg} shows how, in this scenario, the error is dominated by 
the approximation of the forcing term, which behaves as \(\mathcal{O}(H)\). %resulting in the same convergence rate as FEM: 
%second order in the \(L^2\)-norm and first order in the \(H^1\)-norm. 
Under the assumption that the localization error remains small, the second term in the 
right-hand side of \eqref{S-hat-Error-Estimate} determines the overall convergence rate. 
Hence we observe \(\mathcal{O}(H^3)\) for the \(L^2\)-error and \(\mathcal{O}(H^2)\) for 
the \(H^1\)-error, consistent with a smooth forcing term in elasticity problems.

\begin{figure}[!ht]
  \centering
  % This file was created with tikzplotlib v0.10.1.

\begin{tikzpicture}
\begin{axis}[
    name=ax1,
  ErrorPlot,
  xmin=0.0140820384782942, xmax=0.138696184008481,
xmode=log,
title={$\left\lVert \boldsymbol{u} - \boldsymbol{u}_h \right\lVert _{L^2}$},
ymin=2.53026744574434e-05, ymax=0.0211654121908953,
ymode=log,
]
\addplot [p3]
table {%
0.125 0.0140389
0.0625 0.00785848
0.03125 0.00144822
0.015625 0.000386791
% 0.0078125 8.33271e-05
};
\addlegendentry{$m = 2$};
\addplot [p4]
table {%
0.125 0.013985
0.0625 0.00735937
0.03125 0.000864469
0.015625 5.96218e-05
% 0.0078125 9.23072e-08
};
\addlegendentry{$m = 3$};
\addplot [p5]
table {%
0.0625 0.00735895
0.03125 0.000865706
0.015625 5.98815e-05
% 0.0078125 1.21696e-09
};
\addlegendentry{$m = 4$};
\addplot [fem]
table {%
0.125 0.0102091
0.0625 0.0075931
0.03125 0.00724302
0.015625 0.00313834
% 0.0078125 0.00106913
};
\addplot[order, samples=1000, domain=0.015625:0.09] (x,10*x*x*x);
\end{axis}
\begin{axis}[
  at={(ax1.south east)},
  xshift=2cm,
   ErrorPlot,
   xmin=0.0140820384782942, xmax=0.138696184008481,
xmode=log,
title={$\left\lVert \boldsymbol{u} - \boldsymbol{u}_h \right\lVert _{H^1}$},
ymin=0.00671915370778831, ymax=0.195305487367985,
ymode=log
]
\addplot [p3]
table {%
0.125 0.129672
0.0625 0.0958878
0.03125 0.0414062
0.015625 0.02359
% 0.0078125 0.0149303
};
\addplot [p4]
table {%
0.125 0.129326
0.0625 0.0919037
0.03125 0.0300879
0.015625 0.00783128
% 0.0078125 7.52201e-05
};
\addplot [p5]
table {%
0.0625 0.0918969
0.03125 0.0301103
0.015625 0.0078448
% 0.0078125 1.02234e-06
};
\addplot [fem]
table {%
0.125 0.16757
0.0625 0.154354
0.03125 0.139878
0.015625 0.0854935
% 0.0078125 0.0492243
};
\addplot[order, samples=1000, domain=0.0275:0.109] (x,10*x*x);
\end{axis}
\end{tikzpicture}
  \caption{Error behavior of SLOD (with oversampling \(m=2,3,4\)) and standard FEM (blue line) 
    for randomly varying Lam\'{e} coefficients and the smooth right-hand side 
    in \eqref{eq:test_randomcoeff_reg_rhs}. 
    Since \(\fb \in H^1(\Omega;\mathbb{R}^2)\) and the localization error is small, 
    the second term in the right-hand side of \eqref{S-hat-Error-Estimate} dominates. 
    The dashed gray lines indicate the theoretical rates: \(\mathcal{O}(H^3)\) for the 
    \(L^2\)-error and \(\mathcal{O}(H^2)\) for the \(H^1\)-error.}
  \label{fig:test_randomcoeff_reg}
\end{figure}
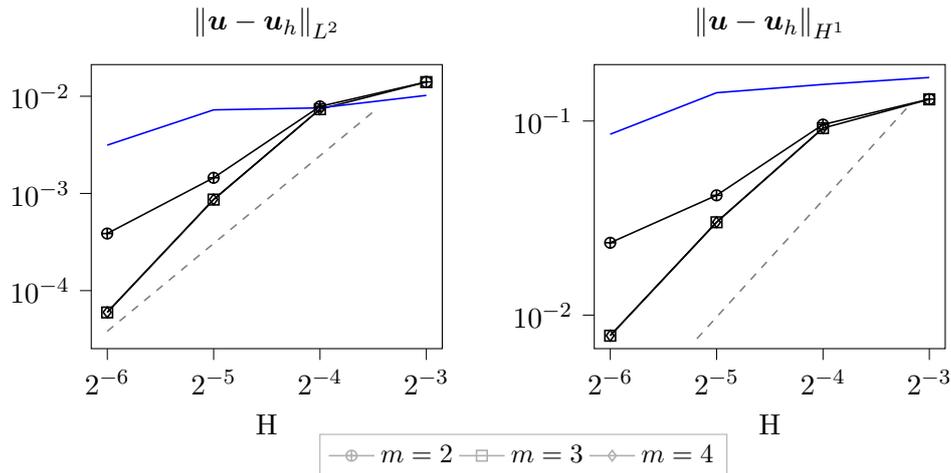

\section{Conclusion}
This paper introduced the Super-Localized Orthogonal Decomposition (SLOD) method for solving 
linear elasticity problems with highly heterogeneous (multiscale) microstructures. By constructing 
super-localized basis functions, SLOD improves upon the standard LOD method in both localization 
and computational efficiency, yet retains comparable theoretical guarantees on accuracy.

Extending the superlocalization framework to vector-valued elasticity underscores SLOD's 
adaptability for diverse engineering and physical applications. Our numerical analysis 
demonstrates stability and convergence, instilling confidence in the method’s reliability. 
Meanwhile, the \texttt{deal.II}-based implementation showcases SLOD’s scalability and 
integration potential in modern high-performance computing workflows, an important step for 
addressing large-scale multiscale problems.

Several directions for future research remain open. First, while numerical evidence supports 
the super-exponential decay of the localization error, deeper theoretical proofs, particularly 
regarding the spectral geometry conjectures in the scalar setting, would further solidify 
SLOD’s foundations. Second, given the success of standard LOD for thermo- and poro-elasticity 
in \cite{MaPersson17, FuACMPP19, AltCMPP20}, extending superlocalization to broader 
multiphysics problems where elasticity couples with other physical processes could yield 
significant new capabilities. Finally, enhancing our parallel implementation and incorporating 
adaptive refinements may expand SLOD’s applicability to even more demanding simulations.

\bibliographystyle{plainnat}
\bibliography{bibliography.bib}

\end{document}